\definecolor{webgreen}{rgb}{0,.5,0}
\definecolor{webbrown}{rgb}{.6,0,0}
\newtheorem{theorem}{Theorem}
\newtheorem{corollary}{Corollary}
\newtheorem{lemma}[theorem]{Lemma}
\theoremstyle{definition}
\newtheorem{thm}{Theorem}
\newtheorem{remark}[thm]{Remark}
\def\Im{\operatorname{Im}}
\def\Re{\operatorname{Re}}
\def\exp{\operatorname{exp}}
\def\Li{\operatorname{Li}}
\def\O{\operatorname{O}}
\def\o{\operatorname{o}}
\def\log{\operatorname{log}}
\begin{document}
\title[Zeros of higher derivatives of Riemann zeta function]{Zeros of higher derivatives of Riemann zeta function}
\author[Mithun Kumar Das]{Mithun Kumar Das}
\address{ ICTP - The Abdus Salam International Centre for Theoretical Physics\\
Strada Costiera, 11, I - 34151, Trieste, Italy, \ \ and \ \
 School of Mathematical Sciences, National Institute of Science Education and Research, Bhimpur-Padanpur, Khurda, Odisha-752050, India. }
\email{das.mithun3@gmail.com}

\author[Sudhir Pujahari]{Sudhir Pujahari}
\address{School of Mathematical Sciences, National Institute of Science Education and Research, Bhimpur-Padanpur, Khurda, Odisha-752050, India.
}
\email{spujahari@niser.ac.in/spujahari@gmail.com}
\subjclass[2000]{11Mxx}
\keywords{ Riemann zeta-function, derivatives of Riemann zeta function, Hardy's $Z$-function,  derivatives of Hardy $Z$-function, Dirichlet polynomials, Mean square.}

\begin{abstract}
In this article, we extend the result of Conrey \cite[Theorem 2]{Con} to shorter intervals for higher-order derivatives of the zeta function.  That is we study the mean value of the product of two finite order derivatives of the zeta function multiplied by a mollifier in short intervals. In this process, we obtain better mollifier length in some short intervals compared to the length of mollifier  implied by Conrey's result.  
These finer studies allow us to refine the error term of some classical results of Levinson and Montgomery~\cite{LM}, Ki and Lee~\cite{KL} on zero density estimates of $\zeta^{(k)}$. 

Further, we showed that almost all non-trivial zeros of Matsumoto-Tanigawa's $\eta_k$-function cluster near the critical line. 
\end{abstract}

\maketitle
\section{Introduction}

\subsection{Zeros of derivatives of zeta functions}
Let $s=\sigma+it$ denote a complex variable, and $\zeta(s)$ be the Riemann zeta function, which satisfies the functional equation \[h(s)\zeta(s)=h(1-s)\zeta(1-s),\] where $h(s)=\pi^{-s/2}\Gamma(s/2)$. 
The famous Riemann hypothesis, predicted by Riemann in 1859, says that all the complex zeros of the zeta function lie on the critical line $\Re(s)=\frac{1}{2}.$ From the work of Speiser~\cite{Speiser}, we know that the Riemann hypothesis is equivalent to the  fact that the derivative of the zeta function does not have zeros in the strip $0< \Re(s)<\frac{1}{2}$.  The distribution of zeros of $\zeta^{(k)}$ has been extensively studied in the literature (see~\cite{Berndt},~\cite{Spira1}~\cite{Spira2},~\cite{Spira3}~\cite{Spira4}).   

 In 1974, Levinson and Montgomery~\cite{LM} conducted an extensive examination of the distribution of zeros pertaining to the higher derivatives of the zeta function. Specifically, they established that if $\rho_k = \beta_k + i \gamma_k$ represents the non-trivial zeros of $\zeta^{(k)}, k\geq 1$ and $\Li(x)=\int_{2}^x(\log{t})^{-1}dt$, then, under the assumption of the Riemann hypothesis, the following expression holds: 
  \begin{align}\label{urh}
\sum_{\substack{0 \leq \gamma_k \leq T\\ \beta_k> \frac{1}{2}  }} \Big(\beta_k - \frac{1}{2} \Big) = k\frac{T}{2\pi}\log\log{\frac{T}{2\pi}} + \frac{T\log{2}}{4\pi}- \frac{T}{2\pi}k\log\log{2} - k\Li\Big(\frac{T}{2\pi}\Big) + \O(\log{T}).
\end{align}
Furthermore, they established that $\zeta^{(k)}(s)$, where $k\geq 1$, possesses only a finite number of complex zeros in the region $\Re(s)<\frac{1}{2}$. Unconditionally they demonstrated that for $T^a\leq U \leq T$, $a >\frac{1}{2}$,   
\begin{align}\label{lm}
\sum_{\substack{T \leq \gamma_1\leq T+U\\ \beta_1> \frac{1}{2}  }} \Big(\beta_1 - \frac{1}{2} \Big) = \frac{U}{2\pi}\log\log{\frac{T}{2\pi}} +\O(U) \mbox{ and } \sum_{\substack{T \leq \gamma_1\leq T+U\\ \beta_1< \frac{1}{2}  }} \Big(\frac{1}{2}-\beta_1  \Big) = \O(U). 
\end{align}
Recently,  Ki and Lee~\cite{KL} extended the above unconditional result \eqref{lm}. Precisely, they showed that for
  $T^a\leq U \leq T$, $a >\frac{1}{2}$, 
\begin{align}\label{kl}
\sum_{\substack{T \leq \gamma_k\leq T+U\\ \beta_k> \frac{1}{2}  }} \Big(\beta_k - \frac{1}{2} \Big) = k\frac{U}{2\pi}\log\log{\frac{T}{2\pi}} +\O(U) \mbox{ and } \sum_{\substack{T \leq \gamma_k\leq T+U\\ \beta_k< \frac{1}{2}  }} \Big(\frac{1}{2}-\beta_k  \Big) = \O(U).
\end{align}

 In~\cite[Theorem 1]{CG5}, Conrey and Ghosh have shown that all most all zeros of $\zeta^{k}(s)$ are in the region $\sigma> \frac{1}{2}-\frac{\phi(t)}{\log \, t}$ for any $\phi(t) \to \infty$ as $t \to \infty$.  Additionally, they proved that, for any $c>0$, $\zeta^{k}(s)$ exhibits a positive proportion of zeros in the region $\sigma \geq \frac{1}{2}+ \frac{c}{\log \, t}.$  Furthermore, assuming the Riemann hypothesis, they demonstrated that, for any $\epsilon>0$, $\zeta^{k}$ possesses $\gg_{\epsilon } T$ zeros in the region \[\frac{1}{2} \leq \sigma < \frac{1}{2}+ \frac{(1+\epsilon) \log \, \log \, T}{\log \, T}, \ \ 0<t<T.\]
%The reader may refer to~\cite{CG5} for more exposer to the distribution of zeros of derivatives of the zeta function.

In this article, we obtain the explicit constant in the error terms of Levinson and Montgomery's results ~\cite[Theorem 5, Corollary of Theorem 4]{LM} as well as  Ki and Lee's results ~\cite[Theorem 3 and Theorem 2]{KL}. 
More explicitly, we refine the error terms appearing in \eqref{lm} and \eqref{kl} as follows:
 \begin{theorem}\label{thm6}
Let $ 0 < \theta < \frac{2k+1}{4(k+1)}$ and  $\rho_k = \beta_k + i \gamma_k$ be zero of $\zeta^{(k)}$ such that $T \leq \gamma_k \leq T+H $ for an integer $k\geq 1$. Then, for any $H=T^a, \frac{1}{2} +\theta < a \leq 1$ we have           
  \begin{align*}
(a)\quad \sum_{\substack{T \leq \gamma_k \leq T+H \\ \beta_k>\frac{1}{2}}} \left(\beta_k - \frac{1}{2}\right) \leq k\frac{H}{2\pi}\log\log{\frac{T}{2\pi }} &
+ \frac{H}{4\pi}\log{\Big(1+ \frac{2}{(2k+1)\theta} + \frac{2k^2\theta}{3(2k-1)}\Big)}\\
\nonumber & -\frac{H}{2\pi}k\log\log{2} + \O\Big(\frac{H(\log\log{T})^3}{\log{T}}\Big).
 \end{align*}
 \begin{align*}
(b)\quad \sum_{\substack{T \leq \gamma_k \leq T+H \\ \beta_k<\frac{1}{2}}} \left(\frac{1}{2} - \beta_k \right) \leq  \frac{H}{4\pi}\log{\Big(\frac{1}{2}+ \frac{1}{(2k+1)\theta} + \frac{k^2\theta}{3(2k-1)}\Big)} + \O\Big(\frac{H(\log\log{T})^3}{\log{T}}\Big).
 \end{align*}
\end{theorem}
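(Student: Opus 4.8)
The plan is to control each sum through Littlewood's lemma, reducing it to a vertical integral of $\log|\zeta^{(k)}|$ on the critical line, and then to extract the constant by a mollified second moment. First I would fix $\sigma_2>1$ to the right of all zeros of $\zeta^{(k)}$ (these exist because $\zeta^{(k)}(s)=(-1)^k(\log 2)^k 2^{-s}(1+\o(1))$ as $\Re(s)\to\infty$) and apply Littlewood's lemma to $\zeta^{(k)}$ on the rectangle with vertices $\tfrac12+iT,\ \sigma_2+iT,\ \sigma_2+i(T+H),\ \tfrac12+i(T+H)$. This gives
\[
2\pi\sum_{\substack{T\le\gamma_k\le T+H\\ \beta_k>\frac12}}\Big(\beta_k-\tfrac12\Big)=\int_T^{T+H}\log|\zeta^{(k)}(\tfrac12+it)|\,dt-\int_T^{T+H}\log|\zeta^{(k)}(\sigma_2+it)|\,dt+\O(\log T),
\]
the error arising from the two horizontal segments, where the number of zeros near height $T$ is $\O(\log T)$; since $H\ge T^{1/2+\theta}$ this is absorbed into the stated error. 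On the right edge the Dirichlet series $\zeta^{(k)}(s)=\sum_n(-\log n)^k n^{-s}$, whose leading term is $n=2$, shows $\log|\zeta^{(k)}(\sigma_2+it)|=k\log\log 2-\sigma_2\log 2+\O(2^{-\sigma_2})$, so that integral contributes $-Hk\log\log 2$ up to auxiliary $\log 2$ terms carried along.

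The heart is an upper bound for $\int_T^{T+H}\log|\zeta^{(k)}(\tfrac12+it)|\,dt$ with the \emph{sharp} leading coefficient $k$. A direct application of Jensen's inequality to $\zeta^{(k)}$ is too lossy: the mean square of $\zeta^{(k)}$ on the critical line has the odd order $(\log T)^{2k+1}$, whose half-logarithm produces the wrong constant $(k+\tfrac12)\log\log T$. To recover the coefficient $k$ I would mollify. Let $M(s)$ be a Levinson-type Dirichlet polynomial of length $T^\theta$ mollifying $\zeta$, and write $\log|\zeta^{(k)}|=\log|\zeta^{(k)}M|-\log|M|$. Since $\zeta M$ is close to $1$, the product $\zeta^{(k)}M$ behaves on average like $\zeta^{(k)}/\zeta\sim(\tfrac12\log\tfrac{t}{2\pi})^{k}$, so its mean square is of the \emph{even} order $(\tfrac12\log\tfrac{T}{2\pi})^{2k}$. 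Jensen's inequality then gives
\[
\int_T^{T+H}\log|\zeta^{(k)}M(\tfrac12+it)|\,dt\le\frac H2\log\Big(\frac1H\int_T^{T+H}|\zeta^{(k)}M(\tfrac12+it)|^2\,dt\Big),
\]
and I would invoke the mean-square asymptotic for the product of $\zeta^{(k)}$ (equivalently $Z^{(k)}$) with a Dirichlet polynomial established earlier in the paper to evaluate the inner integral as $(\tfrac12\log\tfrac{T}{2\pi})^{2k}\big(1+\tfrac{2}{(2k+1)\theta}+\tfrac{2k^2\theta}{3(2k-1)}\big)$ up to an admissible error. This produces the main term $kH\log\log\tfrac{T}{2\pi}$, the constant $\tfrac H2\log\big(1+\tfrac{2}{(2k+1)\theta}+\tfrac{2k^2\theta}{3(2k-1)}\big)$, and auxiliary $\log 2$ terms; the deterministic integral $\int_T^{T+H}\log|M(\tfrac12+it)|\,dt$ is then computed directly, and together with the right-edge contribution the stray $\log 2$ terms reorganise into the single stated term $-Hk\log\log 2$.

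The hypotheses are exactly what make this scheme run. The mollifier length $T^\theta$ times the length $T^{1/2}$ of the approximate functional equation for $\zeta^{(k)}$ must not exceed $H$, which forces $a>\tfrac12+\theta$; the upper bound $\theta<\tfrac{2k+1}{4(k+1)}$ keeps the off-diagonal contributions in the mollified second moment negligible and the displayed constant positive, while the error accumulated across the $k$ logarithmic factors and the mean-square asymptotic yields the final $\O_k\big(H(\log\log T)^3/\log T\big)$. I expect the main obstacle to be precisely this mollified mean-square evaluation on the short interval $[T,T+H]$: obtaining the explicit constant with a power-saving off-diagonal estimate is the delicate step, whereas the Littlewood-lemma reduction and the right-edge analysis are routine. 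For part (2) I would run the same argument on a rectangle lying to the left of the critical line, with vertices $\sigma_1+iT,\ \tfrac12+iT,\ \tfrac12+i(T+H),\ \sigma_1+i(T+H)$ for a fixed $\sigma_1<\tfrac12$, summing $\tfrac12-\beta_k$. There is no $\log\log$ main term here because only few zeros of $\zeta^{(k)}$ lie to the left of the line; the functional equation $h(s)\zeta(s)=h(1-s)\zeta(1-s)$ rewrites $\zeta^{(k)}(\sigma+it)$ for $\sigma<\tfrac12$ in terms of the already-understood values at $1-\sigma$, and the reflected mollified second moment contributes half of the previous constant, giving $\tfrac{H}{4\pi}\log\big(\tfrac12+\tfrac{1}{(2k+1)\theta}+\tfrac{k^2\theta}{3(2k-1)}\big)$ with the same error.
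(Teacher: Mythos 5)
Your part (1) follows the same skeleton as the paper's proof (Littlewood's lemma, then the arithmetic--geometric mean inequality, then the paper's mollified second moment), but two steps fail as written, and both matter because the entire content of the theorem is the explicit constants. First, you apply Littlewood's lemma to the raw function $\zeta^{(k)}$ and assert that the horizontal edges contribute $\O(\log T)$. That is false: $\zeta^{(k)}(s)=(-\log 2)^k2^{-s}(1+\o(1))$ decays with the oscillating phase $2^{-it}$, so the argument terms along the horizontal edges are not $\O(\log T)$; they contain a contribution $-\sigma_2H\log 2+\O(H)$, which is exactly what cancels the $+\sigma_2H\log 2$ produced by your right-edge integral $-\int_T^{T+H}\log|\zeta^{(k)}(\sigma_2+it)|\,dt$. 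If the horizontal edges are treated as $\O(\log T)$, the identity you start from is off by $\asymp \sigma_2H\log 2$ (a term depending on the auxiliary parameter $\sigma_2$), and your later claim that the stray $\log 2$ terms ``reorganise'' into $-Hk\log\log 2$ cannot be carried out. This is precisely why the paper, following Levinson--Montgomery, runs Littlewood's lemma on $z_k(s)\Phi(s)$ with $z_k(s)=(-1)^k2^s(\log 2)^{-k}\zeta^{(k)}(s)\to 1$ as $\sigma\to\infty$: then the edge terms genuinely are $\O(\log T)$, and $\log|z_k(\tfrac12+it)|=\log|\zeta^{(k)}(\tfrac12+it)|+\tfrac12\log 2-k\log\log 2$ delivers the constants, the $+\tfrac12 H\log2$ being what converts $\tfrac H2\log(\tfrac12+\cdots)$ into the stated $\tfrac H2\log(1+\cdots)$. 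Second, your mean-square input is misquoted: $Z^{(k)}$ and $\zeta^{(k)}$ are \emph{not} ``equivalent'' for $k\ge1$ (the paper stresses $|Z^{(k)}(t)|\neq|\zeta^{(k)}(\tfrac12+it)|$), and Corollary~\ref{coro5} gives $\big(\tfrac12+\tfrac{1}{(2k+1)\theta}+\tfrac{k^2\theta}{3(2k-1)}\big)H\big(\log\tfrac{T}{2\pi}\big)^{2k}$, with prefactor $\tfrac12$, whereas your claimed value $\big(\tfrac12\log\tfrac{T}{2\pi}\big)^{2k}\big(1+\tfrac{2}{(2k+1)\theta}+\tfrac{2k^2\theta}{3(2k-1)}\big)$ has prefactor $4^{-k}$; feeding your value into Jensen's inequality shifts the final constant by $-(k-\tfrac12)H\log 2$, so with your numbers the stated bound does not come out (your two errors would have to cancel, which forces $\sigma_2=k$ and is not legitimate). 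A smaller but real gap: the decomposition $\log|\zeta^{(k)}|=\log|\zeta^{(k)}M|-\log|M|$ requires a \emph{lower} bound for $\int_T^{T+H}\log|M(\tfrac12+it)|\,dt$, which is not a ``deterministic integral computed directly'' since $M$ has zeros; the paper sidesteps this entirely by mollifying inside Littlewood's lemma, where the zeros of $\Phi$ only reinforce the upper bound.

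For part (2) your route is genuinely different from the paper's, and it does not work as sketched. On a rectangle left of the critical line the functional equation brings in derivatives of $\chi$, which inject into the edge and argument integrals both a drift of order $H\log\tfrac{T}{2\pi}$ per unit $\sigma$-length and factors $(\log t)^{k}$; the cancellation of the resulting $(\tfrac12-\sigma_1)H\log\tfrac{T}{2\pi}$ and $kH\log\log T$ contributions is exactly what would have to be proved, and your justification that there is no $\log\log$ main term ``because only few zeros lie to the left'' assumes the conclusion. The paper's proof of (2) needs none of this analysis: it subtracts Levinson--Montgomery's unconditional formula $2\pi\sum_{T\le\gamma_k\le T+H}(\beta_k-\tfrac12)=kH\log\log\tfrac{T}{2\pi}+H\big(\tfrac{\log 2}{2}-k\log\log 2\big)+\O\big(H^2/(T\log T)+\log T\big)$, taken over \emph{all} zeros in the strip (their Theorem 3), from the part-(1) upper bound, via the identity $\sum_{\beta_k<1/2}(\tfrac12-\beta_k)=\sum_{\beta_k>1/2}(\beta_k-\tfrac12)-\sum_{\mathrm{all}}(\beta_k-\tfrac12)$; the terms $kH\log\log\tfrac{T}{2\pi}$ and $H(\tfrac{\log2}{2}-k\log\log2)$ cancel exactly, leaving $\tfrac{H}{4\pi}\log\big(\tfrac12+\tfrac{1}{(2k+1)\theta}+\tfrac{k^2\theta}{3(2k-1)}\big)$. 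That two-line deduction is the missing idea in your proposal; without it, your part (2) amounts to reproving Levinson--Montgomery's theorem from scratch.
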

In the following corollary of Theorem~\ref{thm6} we obtain the optimal value of the second order term of $H$.           
 \begin{corollary}\label{coro6}
 Let $k\geq 4$ and $H=T^a$ where $\frac{1}{2} +\sqrt{\frac{3(2k-1)}{k^2(2k+1)}} < a \leq 1$. Then, we have 
 \begin{align*}
\sum_{\substack{T \leq \gamma_k \leq T+H \\ \beta_k> \frac{1}{2}  }} \Big(\beta_k - \frac{1}{2} \Big) \leq k\frac{H}{2\pi}\log\log{\frac{T}{2\pi}} &+\frac{H}{4\pi}\log{\Big(1+\frac{4k}{\sqrt{12k^2-3}}\Big)}\\
&-k\frac{H}{2\pi}\log\log{2}+\O\Big(\frac{H(\log\log{T})^3}{\log{T}}\Big).
\end{align*}

 \begin{align*}
&\sum_{\substack{T \leq \gamma_k \leq T+H \\ \beta_k< \frac{1}{2}  }} \Big(\frac{1}{2} - \beta_k  \Big) \leq \frac{H}{4\pi}\log{\Big(\frac{1}{2}+\frac{2k}{\sqrt{12k^2-3}}\Big)}+ \O\Big(\frac{H(\log\log{T})^3}{\log{T}}\Big).
\end{align*}
 \end{corollary}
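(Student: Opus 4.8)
The plan is to optimise the free parameter $\theta$ in Theorem~\ref{thm6}. Both upper bounds there are monotone increasing in the quantity
\[
g(\theta) := \frac{1}{(2k+1)\theta} + \frac{k^2\theta}{3(2k-1)},
\]
since the argument of the logarithm in part (1) is $1 + 2g(\theta)$ and in part (2) is $\frac{1}{2} + g(\theta)$. Consequently, to obtain the sharpest constant in front of $H$ I would minimise $g$ over the admissible range $0 < \theta < \frac{2k+1}{4(k+1)}$ and substitute the minimiser back into Theorem~\ref{thm6}.

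The minimisation itself is immediate by AM--GM: the two summands of $g$ have product $\frac{k^2}{3(2k+1)(2k-1)} = \frac{k^2}{12k^2-3}$, independent of $\theta$, whence $g(\theta) \geq 2\sqrt{\frac{k^2}{12k^2-3}} = \frac{2k}{\sqrt{12k^2-3}}$, with equality precisely when the two summands coincide, that is at $\theta_0 = \sqrt{\frac{3(2k-1)}{k^2(2k+1)}}$ (equivalently, one solves $g'(\theta) = 0$). Plugging $g(\theta_0) = \frac{2k}{\sqrt{12k^2-3}}$ into the logarithms yields $1 + \frac{4k}{\sqrt{12k^2-3}}$ in part (1) and $\frac{1}{2} + \frac{2k}{\sqrt{12k^2-3}}$ in part (2), which are exactly the constants asserted in the corollary.

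The only genuine verification is admissibility of $\theta_0$: Theorem~\ref{thm6} requires $\theta < \frac{2k+1}{4(k+1)}$, and the hypothesis $\frac{1}{2} + \theta < a \leq 1$ must then be satisfiable. Squaring $\theta_0 < \frac{2k+1}{4(k+1)}$ reduces it to the polynomial inequality $48(2k-1)(k+1)^2 < k^2(2k+1)^3$; a direct check shows the left side dominates for $k = 1, 2, 3$ while the right side dominates for every $k \geq 4$, the two crossing between $k = 3$ and $k = 4$. This is exactly where the restriction $k \geq 4$ in the statement originates, and it is the main (if elementary) point to confirm. For such $k$ the optimal $\theta_0$ is admissible, so applying Theorem~\ref{thm6} with $\theta = \theta_0$ produces both displayed bounds and completes the proof.
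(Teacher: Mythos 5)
Your proposal is correct and takes exactly the paper's own route: the paper obtains the corollary by minimising $\frac{1}{(2k+1)\theta}+\frac{k^2\theta}{3(2k-1)}$ at $\theta_0=\sqrt{\frac{3(2k-1)}{k^2(2k+1)}}$ and observing that admissibility $\theta_0<\frac{2k+1}{4(k+1)}$ forces $k\geq 4$; your AM--GM evaluation and the polynomial inequality $48(2k-1)(k+1)^2<k^2(2k+1)^3$ merely make that check explicit. One remark: like the paper's own derivation, substituting $\theta_0$ into Theorem~\ref{thm6} yields the leading term $kH\log\log\frac{T}{2\pi}$, whereas the corollary as printed shows $k\frac{H}{2}\log\log\frac{T}{2\pi}$; this factor-$\frac12$ discrepancy is a typo in the paper's statement, not a defect of your argument.
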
      
 
\begin{figure}
	\includegraphics[scale=0.50]{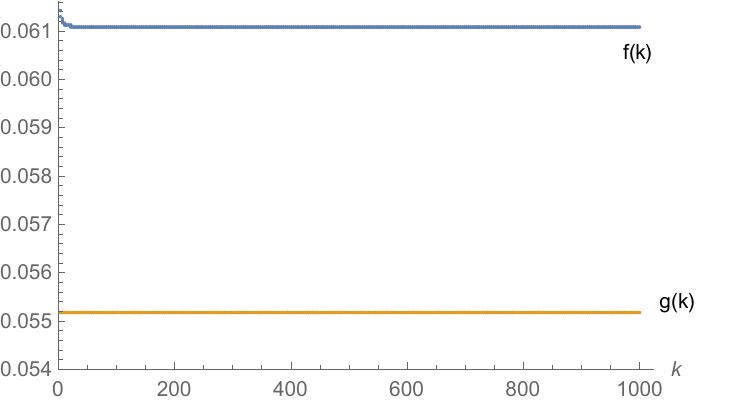}
	\caption{Plot for $f(k)=\frac{1}{4\pi}\log{\Big(1+\frac{4k}{\sqrt{12k^2-3}}\Big)}$ and $g(k)=\frac{1}{4\pi}\log{2}$, $4\leq k\leq 1000$.}\label{figure 1}
\end{figure} 
\subsubsection*{\bf Comparison between Theorem~\ref{thm6} and the conditional result \eqref{urh}}
The comparison between the second-order main term in the conditional result \eqref{urh} of Levinson and Montgomery, and the corresponding term in Theorem~\ref{thm6}(a), reveals a distinction (see Figure 1). 
We have $\log{\big(1+\frac{2}{(2k+1)\theta} + \frac{2k^2\theta}{3(2k-1)}\big)}$ in place of $\log{2}$ in the second order main term of \eqref{urh}.  
 Note that for $k\geq 1$ and $\theta>0,$ the optimal value of $\frac{2}{(2k+1)\theta} + \frac{2k^2\theta}{3(2k-1)}$ is $\frac{4k}{\sqrt{12k^2-3}}$.
This optimal value is attained when $\theta$ lies in the interval $(0,\frac{2k+1}{4(k+1)})$, slightly extending to the right of $\frac{2k+1}{4(k+1)}$ for $k \geq 4$, and for $k=1,2,3$, respectively.
 Note that $\frac{4k}{\sqrt{12k^2-3}}$ exhibits a decreasing trend. For $k\geq 4$ the values fall within the range of $(1.1547, \, 1.1639)$ while for $k=1,2,3$, the values are $\frac{4}{3}, 1.1925\ldots,$ and $ 1.1710\ldots$, respectively. 
 
  From this we can conclude that improving the range of $\theta$ in Corollary~\ref{coro5} with the mollifier would not improve the second order term in Corollary~\ref{coro6}. 
 %We hope that choosing some other mollifier cleverly in Corollary~\ref{coro5} may improve the constant in Corollary~\ref{coro6}.
 
 Figure~\ref{figure 1} compare the conditional bound of the coefficient of $T$(2nd term only), i.e., $g(k):=\frac{1}{4\pi}\log{2}$ given in \eqref{urh} with the unconditional optimal bound of the coefficient of $H$ (2nd term only), i.e.,$f(k):=\frac{1}{4\pi}\log{\Big(1+\frac{4k}{\sqrt{12k^2-3}}\Big)}$  from the first part of Corollary~\ref{coro6}.

\subsubsection*{\bf Methodology}
In~\cite{LM}, Levinson and Montgomery have used the well-known Littlewood lemma on the function $ z_k(s):= (-1)^k2^s(\log{2})^{-k}\zeta^{(k)}(s)$ and  concluded their results. Ki and Lee~\cite{KL} applied the Littlewood lemma on a function which is a linear combination of derivatives of $\zeta$ and by using an upper bound estimate of a fractional moment of logarithmic derivatives of $\zeta$, proved their unconditional result. In our case, we consider the function $z_k(s)\Phi(s)$ in the place of $z_k(s)$, where $\Phi(s)$ is a mollifier and apply Littlewood lemma, this refines the explicit constant of the error terms. The main technique of our method is the asymptotic estimate of the mean square of the product of finite order derivatives of the Riemann zeta function with mollifier within short intervals. This mean square result obtain here in connection with the technique developed in our previous paper \cite{DP1}, which motivated form the work of Selberg~\cite{Selb} and Levinson~\cite{Levi}.

\subsection{Mean square of the product of $\zeta^{(k)}$ with mollifier}
 In 1989, Conrey~\cite{Con} proved that more than two-fifths of the zeros of the Riemann zeta function are on the critical line. To achieve this he showed the following result. 
Let $B(s):= \sum_{n\leq X=T^{\theta}}\frac{b(n)}{n^{s+R/(\log \, T)}}$,  where $b(n)= \mu(n)P\Big(1-\frac{\log{n}}{\log{T^{\theta}}}\Big)$ with $0<\theta< 4/7$,  $0<R \ll 1$ and $P$ be a polynomial such that $P(0)=0, P(1)=1$.    For any polynomial $Q$, consider $V(s):=Q\Big(-\frac{1}{\log \, T}\frac{d}{ds}\Big)\zeta{(s)}$.  Then,
 \begin{align}\label{C1}
 \int_{T}^{2T}\Big|VB\Big(\frac{1}{2}-\frac{R}{\log \, T}+it\Big)\Big|^2 dt \sim c(P,Q,R)T \,\,\mbox{ as } T\rightarrow \infty,
 \end{align}
 where
 \begin{align*}
 c(P,Q,R)= |P(1)Q(0)|^2+ \frac{1}{\theta}\int_{0}^{1}\int_{0}^{1}e^{2Ry}|Q(y)P'(x)+\theta Q'(y)P(x)+ \theta RQ(y)P(x)|^2
dxdy. \end{align*}
 The above mean square result is the key ingredient to prove his result.

In~\cite[(7)]{CG5}, Conrey and Ghosh have stated that for $B(s)$ defined as above with $R=0$, $\theta=\frac{1}{2} $ and there exist a $b(n)$ such that
\begin{equation}\label{ECG1}
\int_2^{T} \left| \frac{\zeta^{(k)}(\frac{1}{2}+it)}{\log^k t} B\left(\frac{1}{2}+it \right)\right|^2dt \sim \left(\frac{1}{2} + \frac{k}{\sqrt{4k^2-1}}\coth\left(\frac{k}{2}\sqrt{\frac{2k+1}{2k-1}}\right)\right)T.
\end{equation}
In ~\cite[Theorem 1]{CGG},  Conrey, Ghosh and Gonek studied similar results in the interval $[1, T]$ with product of two mollfiers in more general settings for $0<\theta < \frac{1}{2}$.  
Our interest here is mainly for $P(x)=x$ and $Q(x)=x^k, k \geq 0$ and $R=0$ in \eqref{C1}. Moreover, we consider product of higher order derivatives of zeta functions. In a personal communication, Conrey mentioned that the mean square result of \eqref{C1} can be extended to $R=0$.

We extend \cite[Theorem 2]{Con} to short intervals $[T, T+H]$ for a shorter length of mollifier by walking through his estimates carefully. 
\begin{theorem}\label{Thm1}
 Let $H=T^a$ and $X=T^{\theta}$. For $ 0 < \theta < \frac{2(7a-5)}{7}$ and $\frac{5}{7}\leq a\leq 1$, we have 
 \begin{align*}
& \int_{T}^{T+H}\Big|\frac{\zeta^{(k)}\big(\frac{1}{2}+it\big)}{\log^k{T}}\sum_{n\leq T^{\theta}}\frac{\mu{(n)}}{n^{\frac{1}{2}+it}}\Big(1- \frac{\log{n}}{\theta\log{T}}\Big)\Big|^2dt \sim
 \Big(\frac{1}{2}+ \frac{1}{(2k+1)\theta}+ \frac{\theta k^2}{3(2k-1)} \Big)H.
 \end{align*}
\end{theorem}

A sketch of the proof of Theorem \ref{Thm1} is given in Section \ref{proofs}.

 In the next result, we extend the range of short intervals up to $\frac{1}{2}< a\leq 1$.  Moreover, we improve the length of the mollifier under certain conditions (see Remark \ref{remark 1} and Figure \ref{figure 2} for detailed discussions) on $a$, compared to Theorem \ref{Thm1}.
 Further, we consider the product of two higher order derivatives of $\zeta$-function.
 \begin{theorem}\label{thm5}
 Let $m, n$ be any non-negative integer and $k=\min\{m,n\}$.  Then, for any $ H=T^a, \frac{1}{2} +\theta < a \leq 1$, where $X= T^\theta, 0<\theta < \frac{2k+1}{4(k+1)}$ we have
\begin{align*}
&\int_{T}^{T+H}\zeta^{(m)}\Big(\frac{1}{2}+it \Big)\zeta^{(n)}\Big(\frac{1}{2}-it \Big) \left|\sum_{n\leq T^{\theta}}\frac{\mu{(n)}}{n^{\frac{1}{2}+it}}\Big(1- \frac{\log{n}}{\theta\log{T}}\Big)\right|^2 dt\\
 &= (-1)^{m+n}\Big[\frac{1}{2} +  \frac{1}{\theta (m+n+1)} + \frac{mn\theta}{3(m+n-1)} + \O\Big(\frac{(\log\log{T})^{3}}{\log{T}}\Big)\Big]H \Big(\log{\frac{T}{2\pi}} \Big)^{m+n}.
\end{align*}  
 \end{theorem}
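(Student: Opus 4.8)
The plan is to deduce Theorem~\ref{thm5} from Theorem~\ref{thm4} by rewriting the product $\zeta^{(m)}(\tfrac12+it)\zeta^{(n)}(\tfrac12-it)$ as a linear combination of products $Z^{(a)}(t)Z^{(b)}(t)$ of derivatives of Hardy's $Z$-function. The starting point is the pointwise identity $\zeta(\tfrac12+it)=e^{-i\theta(t)}Z(t)$ from \eqref{hardyz}. Since $\tfrac{d}{dt}\zeta(\tfrac12+it)=i\,\zeta'(\tfrac12+it)$, we have $\zeta^{(m)}(\tfrac12+it)=(-i)^{m}\tfrac{d^{m}}{dt^{m}}\bigl(e^{-i\theta(t)}Z(t)\bigr)$. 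Applying Leibniz's rule and $\tfrac{d^{r}}{dt^{r}}e^{-i\theta(t)}=(-i\theta'(t))^{r}e^{-i\theta(t)}+(\text{lower order})$ — where the dropped terms involve $\theta'',\theta''',\dots$, all of size $\O(1/t)$ and hence negligible against $\theta'(t)\sim\tfrac12\log\tfrac{t}{2\pi}$ — gives
\[
\zeta^{(m)}\Bigl(\tfrac12+it\Bigr)=(-1)^{m}e^{-i\theta(t)}\sum_{a=0}^{m}\binom{m}{a}i^{a}\bigl(\theta'(t)\bigr)^{m-a}Z^{(a)}(t)+(\text{l.o.}).
\]
Using $\zeta^{(n)}(\tfrac12-it)=\overline{\zeta^{(n)}(\tfrac12+it)}$ and the reality of $Z$ and $\theta$, I conjugate the analogous expansion; multiplying the two factors cancels the phases $e^{\pm i\theta}$ and yields
\[
\zeta^{(m)}\Bigl(\tfrac12+it\Bigr)\zeta^{(n)}\Bigl(\tfrac12-it\Bigr)=(-1)^{m+n}\sum_{a=0}^{m}\sum_{b=0}^{n}\binom{m}{a}\binom{n}{b}i^{a-b}\bigl(\theta'(t)\bigr)^{m+n-a-b}Z^{(a)}(t)Z^{(b)}(t)+(\text{l.o.}).
\]

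Next I would integrate this against $\bigl|\Phi(\tfrac12+it)\bigr|^{2}$ over $[T,T+H]$ with the Möbius mollifier and apply Theorem~\ref{thm4} to each moment $\int Z^{(a)}Z^{(b)}|\Phi|^{2}$. Two reductions occur. First, the factor $\vartheta(a,b)$ kills every pair with $a+b$ odd to main order (for these $i^{a-b}$ is purely imaginary and the contribution lands in the error, consistently with the final answer being real), so only pairs with $a\equiv b\pmod 2$ survive and each surviving $i^{a-b}=(-1)^{(a-b)/2}$ is real. Second, over the short interval $\theta'(t)=\tfrac12\log\tfrac{t}{2\pi}$ equals $\tfrac12\log\tfrac{T}{2\pi}$ up to relative error $\O(1/\log T)$, so I may pull out $\bigl(\tfrac12\log\tfrac{T}{2\pi}\bigr)^{m+n-a-b}$. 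Since Theorem~\ref{thm4} supplies $\int Z^{(a)}Z^{(b)}|\Phi|^{2}\sim \tfrac{\vartheta(a,b)}{2^{a+b}}\widetilde P_{a,b}(\theta)\,H\bigl(\log\tfrac{T}{2\pi}\bigr)^{a+b}$, the powers $\bigl(\tfrac12\log\tfrac{T}{2\pi}\bigr)^{m+n-a-b}$ and $2^{-(a+b)}\bigl(\log\tfrac{T}{2\pi}\bigr)^{a+b}$ combine to the common factor $2^{-(m+n)}H\bigl(\log\tfrac{T}{2\pi}\bigr)^{m+n}$ for \emph{every} admissible pair.

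This collapses the whole computation to evaluating
\[
\frac{1}{2^{m+n}}\sum_{\substack{0\le a\le m,\ 0\le b\le n\\ a\equiv b\ (\mathrm{mod}\ 2)}}\binom{m}{a}\binom{n}{b}i^{a-b}\,\widetilde P_{a,b}(\theta),
\]
and I expect this to be the main obstacle. Here $\widetilde P_{a,a}(\theta)=1+\tfrac{1}{(2a+1)\theta}+\tfrac{4a^{2}\theta}{3(2a-1)}$ on the diagonal (where the $\mathcal{G}$-term of Theorem~\ref{thm4} vanishes), while for the off-diagonal pairs $a\neq b$ the $\mathcal{G}$-contribution is active and must be carried along; getting this bookkeeping right is exactly where the delicacy lies. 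I would split $\widetilde P_{a,b}$ into its three pieces and attack each by generating functions: the constant piece via $\sum_{a,b}\binom{m}{a}\binom{n}{b}(ix)^{a}(-ix)^{b}=(1+ix)^{m}(1-ix)^{n}$ together with the even-parity projector $\tfrac12\bigl(1+(-1)^{a+b}\bigr)$, and the two remaining pieces via the beta-type representations $\tfrac{1}{a+b+1}=\int_{0}^{1}x^{a+b}\,dx$ and $\tfrac{1}{a+b-1}=\int_{0}^{1}x^{a+b-2}\,dx$, which turn them into elementary integrals of $(1\pm ix)^{m}(1\mp ix)^{n}$ and of its first derivative. Taking real parts should then collapse the sum to $\tfrac12+\tfrac{1}{(m+n+1)\theta}+\tfrac{mn\theta}{3(m+n-1)}$. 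A further point to watch is the range of $\theta$: applying Theorem~\ref{thm4} to the low-order pairs (e.g.\ those with $\min(a,b)=0$) a priori demands $\theta<\tfrac14$, so reaching the full range $0<\theta<\tfrac{2k+1}{4(k+1)}$ with $k=\min\{m,n\}$ claimed in the statement will require treating the combined sum rather than each pair in isolation.

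Finally, the error term is routine once the above is in place. Each invocation of \eqref{moment-prod-hardy} carries the relative error $\O_{m,n}\bigl((\log\log T)^{3}/\log T\bigr)$ recorded in Remark~\ref{rem1}; the replacement of $\theta'(t)$ by $\tfrac12\log\tfrac{T}{2\pi}$ costs $\O(1/\log T)$; and the discarded Bell-polynomial corrections and the odd-parity moments are bounded by Cauchy--Schwarz against the known second moments of $Z^{(a)}\Phi$. All of these are of the asserted size $\O_{m,n}\bigl((\log\log T)^{3}/\log T\bigr)$ relative to the main term $H\bigl(\log\tfrac{T}{2\pi}\bigr)^{m+n}$, so the dominant difficulty remains the exact combinatorial collapse together with the correct assembly of the off-diagonal $\mathcal{G}$-contributions.
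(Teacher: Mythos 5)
Your strategy is in substance the same as the paper's own proof: Proposition~\ref{darivative-zeta} is precisely your Leibniz expansion (the paper differentiates Hall's identity $n-1$ times, i.e.\ expands $e^{-i\theta(t)}Z(t)$ shifted by one derivative), each product $Z^{(a)}Z^{(b)}$ against $|\Phi|^2$ is then evaluated by Proposition~\ref{L12} (the engine behind \eqref{moment-prod-hardy}), and the double binomial sum is collapsed by integrating binomial identities, which is exactly your beta-integral device. The problem is not the route but your central combinatorial identity, which as displayed is wrong: you quote Theorem~\ref{thm4} in the form $\int Z^{(a)}Z^{(b)}|\Phi|^2 \sim \frac{\vartheta(a,b)}{2^{a+b}}\widetilde P_{a,b}(\theta)H\bigl(\log\tfrac{T}{2\pi}\bigr)^{a+b}$ and then silently drop $\vartheta(a,b)$ from the sum you propose to evaluate, keeping only the weight $i^{a-b}$. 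The correct value of $\vartheta$ on pairs with $a-b$ even is the one in Proposition~\ref{L12}, namely $\vartheta(a,b)=(-1)^{(b-a)/2}$; the exponent ``$k_2-k_1$'' printed in the statement of Theorem~\ref{thm4} is a misprint, since $(-1)^{k_2-k_1}$ is identically $1$ when $k_1+k_2$ is even, whereas for instance the main term of $\int Z\,Z''|\Phi|^2$ is negative (as integration by parts suggests), forcing $\vartheta(0,2)=-1$. Since $i^{a-b}=(-1)^{(a-b)/2}$ when $a-b$ is even, one has $i^{a-b}\vartheta(a,b)=1$: every surviving pair enters with weight $+1$, and the sum to evaluate is $2^{-(m+n)}\sum_{a\equiv b}\binom{m}{a}\binom{n}{b}\widetilde P_{a,b}(\theta)$, with no oscillation at all.

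This is a step that would genuinely fail, not a cosmetic slip: your sum $2^{-(m+n)}\sum_{a\equiv b}\binom{m}{a}\binom{n}{b}i^{a-b}\widetilde P_{a,b}(\theta)$ produces a different constant as soon as $\max(m,n)\ge 2$. For $m=n=2$ its ``constant piece'' is $\tfrac{1}{16}\Re\bigl[(1+i)^2(1-i)^2\bigr]=\tfrac14$ instead of the required $\tfrac12$, so the generating-function evaluation you outline cannot reproduce the theorem. Once $\vartheta$ is restored, your plan does go through, and the three pieces evaluate cleanly: $\sum_{a\equiv b}\binom{m}{a}\binom{n}{b}=2^{m+n-1}$, $\sum_{a\equiv b}\binom{m}{a}\binom{n}{b}\tfrac{1}{a+b+1}=\tfrac{2^{m+n}}{m+n+1}$, and $\sum_{a\equiv b}\binom{m}{a}\binom{n}{b}\tfrac{ab}{a+b-1}=\tfrac{mn\,2^{m+n-2}}{m+n-1}$, giving exactly $\tfrac12+\tfrac{1}{(m+n+1)\theta}+\tfrac{mn\theta}{3(m+n-1)}$; this computation is the same one the paper performs with its $\mathcal{I}_1,\mathcal{I}_2$, just grouped differently. (Incidentally, done correctly your parity projector also shows the stated formula cannot hold at $m=n=0$, where the projector gives $1$ rather than $\tfrac12$, in agreement with \cite{BCH}; both your argument and the paper's genuinely require $m,n\ge1$.) Your final worry about the range of $\theta$ for pairs with $\min(a,b)=0$ is well founded, but note that the paper does not resolve it either: its proof also feeds $G_0G_0$ into Proposition~\ref{L12}, whose mollified case is only stated for $\theta<\tfrac14$, so on this point your proposal is no weaker than the published argument.
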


 \begin{remark}
In Theorem \ref{thm5}, we have given the upper bound of $\theta$ as $\frac{2k+1}{4(k+1)}$ for $k=\min\{m,n\}$ and $m, n$ are non-negative integers. We use Theorem~\ref{thm5} to prove Theorem~\ref{thm6} and when $k\geq 4$, the range of $\theta$ here is sufficient to get optimal bound in Theorem~\ref{thm6}.
 \end{remark}

 In particular, choosing $m=n=k$ we obtain the mean value of  higher derivatives of Riemann zeta function product with the Dirichlet polynomial in the following corollary: 
 \begin{corollary}\label{coro5}
  Let  $ 0 < \theta < \frac{2k+1}{4(k+1)}$. Then, for any $H=T^a$ and  $\frac{1}{2} + \theta < a \leq 1$   we have 
 \begin{align*}
& \int_{T}^{T+H}\Big|\zeta^{(k)}\Big(\frac{1}{2}+it\Big)\sum_{n\leq T^{\theta}}\frac{\mu{(n)}}{n^{\frac{1}{2}+it}}\Big(1- \frac{\log{n}}{\theta\log{T}}\Big)\Big|^2dt \\
 & = \Big(\frac{1}{2}+ \frac{1}{(2k+1)\theta}+ \frac{\theta k^2}{3(2k-1)} + \O\Big(\frac{(\log\log{T})^{3}}{\log{T}}\Big)\Big)H\Big(\log{\frac{T}{2\pi}}\Big)^{2k}.
 \end{align*}
  \end{corollary}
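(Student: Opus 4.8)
The plan is to obtain Corollary~\ref{coro5} as the diagonal case $m=n=k$ of Theorem~\ref{thm5}, so that the whole argument reduces to a specialization of the constants together with one symmetry observation; no new analytic input is required beyond what Theorem~\ref{thm5} already supplies.

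First I would record the reflection identity that turns the product $\zeta^{(m)}\zeta^{(n)}$ into a modulus when $m=n=k$. Since $\zeta$ has a real power-series expansion about any real base point, every derivative $\zeta^{(k)}$ again has real Taylor coefficients there, and by analytic continuation (away from the pole at $s=1$, which the line $\frac12+it$ avoids) one gets $\zeta^{(k)}(\bar s)=\overline{\zeta^{(k)}(s)}$. Taking $s=\frac12+it$, so that $\bar s=\frac12-it$, this reads
\[
\zeta^{(k)}\Big(\tfrac12-it\Big)=\overline{\zeta^{(k)}\Big(\tfrac12+it\Big)},
\]
whence
\[
\zeta^{(k)}\Big(\tfrac12+it\Big)\,\zeta^{(k)}\Big(\tfrac12-it\Big)=\Big|\zeta^{(k)}\Big(\tfrac12+it\Big)\Big|^{2}.
\]
Multiplying by the common factor $|\Phi(\frac12+it)|^{2}$, the integrand of Theorem~\ref{thm5} at $m=n=k$ becomes exactly the integrand of Corollary~\ref{coro5}.

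Next I would substitute $m=n=k$ into the right-hand side of Theorem~\ref{thm5} and collapse the constants: the prefactor is $(-1)^{m+n}=(-1)^{2k}=1$; the bracketed terms become $\frac12$, $\frac{1}{\theta(m+n+1)}=\frac{1}{(2k+1)\theta}$, and $\frac{mn\theta}{3(m+n-1)}=\frac{\theta k^{2}}{3(2k-1)}$; the error term $\O_{m,n}(\cdot)$ specializes to $\O_{k}(\cdot)$; and $(\log\frac{T}{2\pi})^{m+n}$ becomes $(\log\frac{T}{2\pi})^{2k}$. I would also check that the hypothesis $k=\min\{m,n\}$ is met trivially ($\min\{k,k\}=k$), so the admissible ranges $0<\theta<\frac{2k+1}{4(k+1)}$ and $\frac12+\theta<a\le1$ carry over verbatim. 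Assembling these pieces yields precisely the claimed asymptotic.

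Because Theorem~\ref{thm5} is invoked as a black box, there is no substantive obstacle in this deduction; the only point needing care is to verify the reflection identity for the higher derivative $\zeta^{(k)}$ rather than for $\zeta$ alone, since it is this that licenses rewriting the off-diagonal product $\zeta^{(k)}(\frac12+it)\zeta^{(k)}(\frac12-it)$ as $|\zeta^{(k)}(\frac12+it)|^{2}$. Everything else is the purely algebraic simplification of the $m=n=k$ constants described above.
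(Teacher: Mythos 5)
Your proof is correct and takes essentially the same route as the paper: Corollary~\ref{coro5} is obtained there precisely by specializing Theorem~\ref{thm5} to $m=n=k$, with the reflection identity $\zeta^{(k)}(\overline{s})=\overline{\zeta^{(k)}(s)}$ (which converts $\zeta^{(k)}(\tfrac12+it)\zeta^{(k)}(\tfrac12-it)$ into $|\zeta^{(k)}(\tfrac12+it)|^{2}$) left implicit, whereas you verify it explicitly. Nothing further is needed.
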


\begin{remark}\label{remark 1}
\begin{figure}\label{figure 2}
	\includegraphics[scale=0.7]{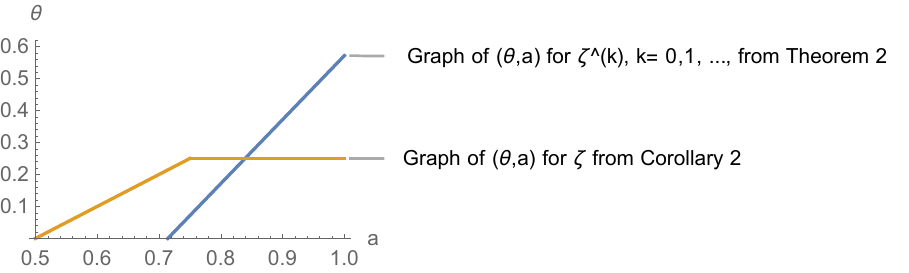}
	\includegraphics[scale=0.7]{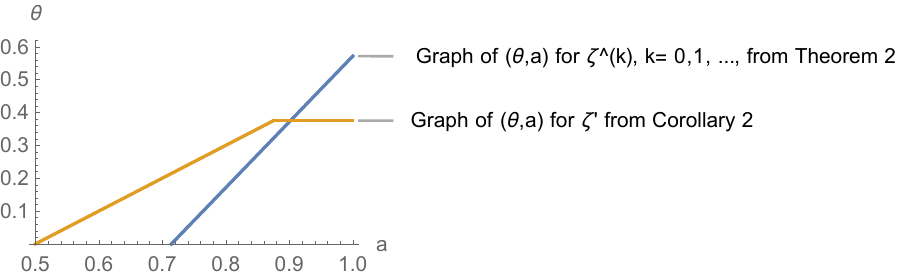}
	\includegraphics[scale=0.7]{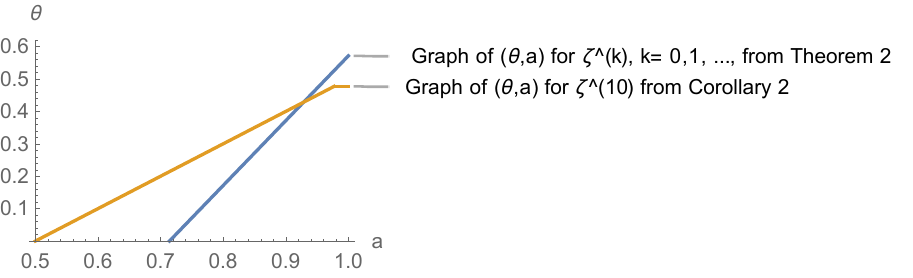}
	\caption{Comparison between the plots of $(a,\theta)$ for Theorem \ref{Thm1} and Corollary \ref{coro5} }\label{figure 2}
\end{figure} 
 Corollary ~\ref{coro5} improves significantly the permissible ranges of $\theta$ and $a$ in Theorem \ref{Thm1}.  That is, we have $\theta \leq \min\{a - \frac{1}{2}, \frac{2k+1}{4(k+1)}\}$ with $\frac{1}{2} <a\leq 1$ in Corollary \ref{coro5} where as  $\theta < \frac{2(7a-5)}{7}$ with $\frac{5}{7}\leq a\leq 1$ in Theorem \ref{Thm1}. We have pictorially demonstrate the comparison in Figure \ref{figure 2} above.
 
 For instance, if $k=0$ and $a=\frac{3}{4}$, in Corollary ~\ref{coro5} we have $\theta < \frac{1}{4}$, where as in Theorem \ref{Thm1} we have $\theta < \frac{1}{14}$. 
  Moreover, for $\frac{3}{4}<a \leq 1$,  $\theta < \frac{1}{4}$ in Corollary~\ref{coro5}.  For $\frac{1}{2}< a \leq .84$  the range  $\theta$ is better in Corollary \ref{coro5} compared to Theorem \ref{Thm1}. In contrast, the range of $\theta$ in Theorem \ref{Thm1} is better than the range of $\theta$ in Corollary \ref{coro5} for $0.84\leq a\leq 1$ (see the first graph in Figure \ref{figure 2} ).

 For $k>2$ we get a larger range of $a$ i.e., $a \in (\frac{1}{2}, \frac{13}{14} ]$, in which the range of $\theta$ in Theorem \ref{thm5} and Corollary \ref{coro5} is bigger than the range of $\theta$ in Theorem \ref{Thm1}.  In the other hand for $a \in (\frac{13}{14}, 1 ]$, the range of $\theta$ in Theorem \ref{thm5} is smaller than the range of $\theta$ in Theorem \ref{Thm1} (see the 3rd graph in Figure \ref{figure 2}  ). 
\end{remark}

 \subsection{Zeros of Matsumoto-Tanigawa's $\eta_k$-function} 
 The Hardy's $Z$-function $Z(t)$ is a real valued function for $ t \in \mathbb{R}$ which is defined by
 \begin{align}\label{hardyz}
 Z(t)= e^{i\theta(t)}\zeta(1/2 + it),
 \end{align}
 where $\theta(t)=\arg{h(1/2+it)}$. In other words,
\begin{equation*}
    Z(t) = \zeta\left(\frac{1}{2}+it\right)\chi\left(\frac{1}{2}+it\right)^{-\frac{1}{2}},
\end{equation*}
where $\chi(s) = \frac{h(1-s)}{h(s)}.$ Clearly, $e^{i\theta(t)}=\chi\left(\frac{1}{2}+it\right)^{-\frac{1}{2}}.$ Also, the following definitions play important roles to describe the results of this subsection.  In this subsection we follow the notations from~\cite{Ande} and ~\cite{MT}.  That is let
 
 \begin{equation}\label{omega}
 \omega(s):=\frac{\chi'}{\chi}(s),
 \end{equation}
 \begin{equation}\label{lambda}
 \lambda(s):=\dfrac{\omega'}{\omega}(s)- \frac{1}{2}\omega(s).
 \end{equation}
%They determined an explicit formula for $\eta_k(s)$, 
For any positive integer $k$,
\begin{align}\label{int2}
\eta_k(s):=\lambda_k(s)\zeta(s)+\sum_{j=1}^{k-1}\binom{k}{j}\lambda_{k-j}(s)\zeta^{(j)}(s)-\frac{2}{\omega(s)}\zeta^{(k)}(s),
\end{align}
where $\lambda_1(s)=1, \lambda_2(s)=\lambda(s)$ and for $k\geq 1$ 
\begin{align}\label{lambda-recu}
\lambda_{k+1}(s)=\lambda(s)\lambda_k(s)+ \lambda_k^{'}(s).
\end{align}
 In order to study the zeros of $Z^{'}(t)$, Anderson~\cite{Ande} defined a meromorphic function $\eta(s)$ 
\begin{align}\label{eta}
\eta(s)= \zeta(s)-\frac{\zeta'(s)}{\omega(s)},
\end{align}
where $\omega$ is as defined in \eqref{omega}.  He proved that the function $\eta(s)$ satisfies the relation 
\begin{align*}
Z'(t)=\frac{d}{dt}\left(e^{i\theta(t)}\right) \eta\left(\frac{1}{2}+it\right)
\end{align*}
and that all the zeros of $Z'(t)$ are the zeros of $\eta$ of the form $\frac{1}{2}+it$. He also counted the number of zeros of $\eta$ in some vertical strip as well as on the critical line and under Riemann hypothesis he proved that all the non-trivial zeros of $\eta$-function are along the critical line.   
The  higher derivatives case was studied by Matsumoto and Tanigawa~\cite{MT}.   To study the zeros of $Z^{(k)}(t)$
they introduced a meromorphic function $\eta_k$ as defined in \eqref{int2} and proved the following;  \\
{\bf (a)}
\begin{equation*}
 \eta_{k+1}(s)=\lambda(s)\eta_k(s)+ \eta'_k(s), \quad k\geq 1,
 \end{equation*} 
 with $\lambda(s)$ is as in \eqref{lambda} and $\eta_1(s) :=\eta(s)$.
 
{\bf (b)}  The function $\eta_k$ satisfy the functional equation
 \begin{align}\label{eta-equation}
 \eta_k(s) = (-1)^k\chi(s)\eta_k(1-s).
 \end{align}  

{\bf (c)}
Let 
$R_k(T) :=  \lbrace \sigma+it: 1-2m < \sigma < 2m, 0 <t <T \rbrace $
and 
$N_{R_k}(T)$ be the number of zeros of $\eta_k$ in the rectangle $R_k.$   Then 
\begin{eqnarray}\label{eta_k_zero}
  N_{R_k}(T)=  \frac{T}{2\pi}\log{\frac{T}{2\pi}} - \frac{T}{2\pi} + \O_k(\log{T}).
 \end{eqnarray}
 
 {\bf (d)} For $k\geq 1$
 \begin{align}\label{int1}
 Z^{(k)}(t)=i^{k-1}\frac{d}{dt}\left(e^{i\theta(t)}\right) \eta_k\left(\frac{1}{2}+it \right).
 \end{align}

 {\bf (e)}
 If the number of zeros of $Z^{(k)}$ in $(0, T)$ is $N_{k,0}(T)$,  then
 \begin{align*}
 T\log{T} \ll N_{k,0}(T) \leq  \frac{T}{2\pi}\log{\frac{T}{2\pi}} - \frac{T}{2\pi} + \O_k(\log{T}).
 \end{align*} 
 
 \begin{remark}
 One has the identity  $\omega\left(\frac{1}{2}+it\right)= -2\theta'(t)$ and by using it we extend the recurrence relation of $\eta_k$ for all non-negative integers by defining $\eta_0(s)=\frac{2\zeta(s)}{\omega(s)}$.  This gives the extension of the relation \eqref{int1} to all non-negative integers. 
 \end{remark}
 
% Since $\eta_k$ is an important tool to study the higher derivative of Hardy's $Z$-function,  the mean value of $\eta_k$ is important, which we obtain as a corollary to Theorem \ref{thm1}.
%\begin{corollary}\label{coro3}
%  Let $k$ be a positive integer, $ 0 < \theta < \frac{2k+1}{4(k+1)}$ and $H=T^a, \frac{1}{2} + \theta < a \leq 1$.  Then
% \begin{align*}
% \int_{T}^{T+H}\Big|\eta_k\Big(\frac{1}{2}+ it\Big)\sum_{n\leq T^{\theta}}\frac{\mu(n)}{n^{\frac{1}{2}+it}}\Big(1- \frac{\log{n}}{\theta\log{T}}\Big)\Big|^2 dt \sim 4P_k(\theta)H\Big(\log{\frac{T}{2\pi}}\Big)^{2(k-1)} \mbox{ as } T\rightarrow \infty.
% \end{align*}
% \end{corollary} 

 %It is not known that, like zeta and its derivatives,  the zeros of $\eta_k$ is cluster near the critical line or not. In the following theorem we give an affirmative answer.
In the next result we investigate the distribution of zeros of $\eta_k$ near the critical line.  To state the result for any positive integer $m$, we define the following region
$$U_{m}(H):=\{\sigma+it: \frac{1}{2} <\sigma < m, T<t \leq T+H \ \mbox{ with} \  2< m=H^{\kappa}, 0< \kappa<1 \}$$
 and
  $$N_{\eta_k}( m, H):= |\{ \rho_k= \beta_k +i\gamma_k \in U_m(H) : \eta_k(\rho_k)=0  \}|.$$

%From \eqref{int1} it is obevious that that all the zeros of $\eta_k$ of the form $\frac{1}{2} + it$ are the zeros of $Z^{(k)}$.   
\begin{theorem}\label{thm2}
 Let $k$ be a positive integer, $ 0 < \theta < \frac{2k+1}{4(k+1)}$ and $H=T^a, \frac{1}{2} + \theta < a \leq 1$.  Let $ \rho_k:= \beta_k+i\gamma_k$ denotes the zeros of $\eta_k(s)$ in the region $U_{m}(H)$.   As $T \to \infty$,
 \begin{align*}
 \Big(\beta_k-\frac{1}{2}\Big)N_{\eta_k}(m, H) \leq \sum_{\substack{\eta_k(\rho_k)=0\\\frac{1}{2}<\beta_k\leq m}} \left(\beta_k- \frac{1}{2}\right) \leq \left(\frac{\log{(4^kP_k(\theta))}}{2} + \o(1) \right)\frac{H}{2\pi}.
 \end{align*}
 \end{theorem}
%
% From Theorem \ref{thm2} we derive the following corollary:
% \begin{corollary}\label{eta_k_0}
%    Under the assumptions of Theorem~\ref{thm2} we have
% \begin{align*}
% N_{\eta_k}(\sigma, m, H) \leq \frac{H}{2\pi (\sigma-\frac{1}{2})}\left(\frac{\log{(4^kP_k(\theta))}}{2} +(k-1)\log{2} + \o(1) \right).
% \end{align*}
% \end{corollary}
% 
From \eqref{eta_k_zero} and Theorem~\ref{thm2} we conclude that all most all zeros of $\eta_k$-function are near the critical line.

 Conray and Ghosh~\cite[p. 195]{CG} remarked that for each positive integer $k$,  there exist a meromorphic function $Z_k(s)$ satisfying the  functional equation $Z_k(s)=(-1)^k\chi(s)Z_k(1-s)$ such that $|Z^{(k)}(t)|=\left|Z_k\left(\frac{1}{2}+it\right)\right|$.
 In the following remark, we are providing an example of such functions. 
 \begin{remark}
Let $\omega$ as in \eqref{omega} and $\eta_k$ as in \eqref{int2} satisfy \eqref{eta-equation}.  If we consider $Z_k(s):= \frac{\omega \eta_k}{2i^k}(s)$ for $k\geq 0$. Then $Z_k$ is a meromorphic function, satisfying the conditions remarked by Conrey and Ghosh~\cite[p. 195]{CG}. 
 \end{remark}
 
 The article is organized as follows.   In Section 2,  we recall and prove some preliminary results that we need to prove our main results. In the final section, we prove the main results.
     
 %%%%%%%%%%%%%%%%%%%%%%%%%%%%%%%%%%%%%%%%%%%%%%%%%%%%%%%%%%%%%%%%%%%%%%%%%%%%%%%%%%%%%%%%%%%%%%%%%%%%%%%%%%%%%%%%%%%% 
 
 \section{Preliminaries}\label{pre}
 Now, recall the second part of Proposition~2 from \cite{DP1}, which is one of the main ingredient to prove Theorem~\ref{thm5} and we state it as a lemma. To serve our purpose, we define  
\[G_k(t):=\frac{Z^{(k)}{(t)}}{(\log{\tau})^{k}} \quad \mbox{ and }\quad J_{k_1, k_2}(T):= \int_{T}^{T+H}{G_{k_1}(t)G_{k_2}(t) \Big|\sum_{n\leq T^{\theta}}\frac{\mu{(n)}}{n^{\frac{1}{2}+it}}\Big(1- \frac{\log{n}}{\theta\log{T}}\Big)\Big|^2}dt,\] where $\tau= \sqrt{t/2\pi}$.
\begin{lemma}[Proposition~2, \cite{DP1}]\label{L12}
Let $k_1, k_2$ be non-negative integers such that $\kappa:=\min(k_1, k_2)$ and define $\vartheta(k_1,k_2)= (-1)^{\frac{k_1-k_2}{2}}$ or $0$ according to $k_1+k_2$ is even or odd. Also, let $0< \theta < \frac{2\kappa+1}{4(\kappa+1)} $ and $H= T^a, \frac{1}{2}+\theta < a <\frac{4\kappa+3}{4(\kappa+1)}$. For $k_1, k_2 \geq 1 $, we have  
\begin{align}\label{asym-sp}
J_{k_1, k_2}(T)=  \vartheta(k_1,k_2)\left(1 + \frac{1}{\theta (k_1+k_2+1)} + \frac{4 k_1k_2 \theta }{3(k_1+k_2-1)} + \O\Big(\frac{(\log\log{T})^3}{\log{T}}\Big)\right)H.
\end{align}
Further, we have
\begin{align}\label{ext-sp}
J_{k_1, k_2}(T) = \begin{cases}
 H\Big(1+ \frac{1}{\theta} + \O\Big(\frac{(\log\log{T})^3}{\log{T}}\Big)\Big), \mbox{ if } k_1 = 0,  k_2 =0,\\
 \O\Big(\frac{H(\log\log{T})^3}{\log{T}}\Big) , \mbox{ if }  k_1 = 0 , k_2 =1,\\
\vartheta{(0,k_2)} H\Big(1 +\frac{1}{(k_2+1)\theta} +\O\Big(\frac{(\log\log{T})^3}{\log{T}}\Big)\Big), \mbox{ if }  k_1 = 0 , k_2 \geq 2.
\end{cases}
\end{align}
\end{lemma}
 We want to obtain the upper bound of higher derivatives of zeta function and Hardy's $Z$-function. For this we need the following essential tool.
     
\begin{lemma}\label{L15}
Let $\mathscr{D}:= \left\lbrace s: \frac{1}{2}\leq\sigma\leq m(t),~ 2\pi \leq t \right\rbrace$ with $m(t)=\o(t)$. For $k\geq 1$, $\tau = \sqrt{\frac{t}{2\pi}}$ and $\lambda_k$ is as in \eqref{lambda-recu} we have
 $$\lambda_k(s)=\left(\log{\tau}\right)^{k-1} + \O\left(t^{-1}(\log{t})^{k-1}\right).$$
\end{lemma}
\begin{proof}
First, we expand the recurrence relation~\eqref{lambda-recu} of $\lambda_k$ for $k\geq 3$. By using induction principle we write 
\begin{align*}
\lambda_k(s)=\lambda(s)^{k-1} + \Lambda_k{(s)}  \mbox{ with } \Lambda_k(s)=\sum_{m=0}^{k-3}(\lambda(s))^{k-3-m}P_{m+1}(\lambda)(s)
\end{align*}
and 
\begin{align*}
P_n(\lambda):=\sum_{(m_1,m_2,...,m_n)}a_{(m_1,m_2,...,m_n)}\prod_{j=1}^n \left(\lambda^{(j)}\right)^{m_j},
\end{align*}
where $a_{(m_1,m_2,...,m_n)}$ are non-negative integers for $0\leq m_j\leq n-1$ such that $\sum_{j=1}^njm_j\leq  n$. 
 By logarithmic derivative of $\chi(s)$ along with~\cite[Eq: 3.1, 3.2]{MT} we write 
\begin{align*}
\omega(s)= \frac{\pi}{2}\tan{\frac{\pi s}{2}} - \log{\frac{s}{2\pi}} +\frac{1}{2s}+\O\left(\frac{1}{|s|^2}\right),
\end{align*}
where $\sigma>0$ and $\omega(s)$ is as in \eqref{omega}. Using the fact that for $s\in \mathscr{D}$, $\tan{\frac{\pi s}{2}} = i + \O(e^{-\pi t})$ we obtain
\begin{align}\label{omegabound}
\omega(s)= - \log{\frac{|s|}{2\pi}} + \O\left(\frac{1}{|s|}\right) \mbox{ and } \omega^{'}{(s)} = \frac{\pi^2}{4}\sec^2{\frac{\pi s}{2}} + \O\left(\frac{1}{|s|}\right)= \O\left(\frac{1}{|s|}\right).
\end{align}
Using \eqref{lambda} we deduce
$$\lambda(s)= \log{\tau} + \O\left(\frac{1}{t}\right).$$ 
Taking the higher derivatives of $\lambda(s)$ in \eqref{lambda} we have 
\begin{align*}
\lambda^{(k)}(s)= \left(\frac{\omega'}{\omega}\right)^{(k)}(s) - \frac{1}{2}\omega^{(k)}(s).
\end{align*}
For $s \in \mathscr{D}$ we obtain
\begin{align*}
\lambda^{(k)}(s) = \O{(t^{-k}}).
\end{align*}
 Hence, for $k\geq 1$ and $s\in \mathscr{D}$ we get
  $$\lambda_k(s)=\left(\log{\tau}\right)^{k-1} + \O\left(t^{-1}(\log{t})^{k-2}\right).$$
  This completes the proof of the lemma.
\end{proof}

\begin{lemma}\label{L14}
For any $\sigma\geq \frac{1}{2}$, $t\geq 2\pi$ and non-negative integer $k$ we have
\begin{align*}
\zeta^{(k)}(s) \ll \left(t^{\frac{1-\sigma}{3}}+1\right)(\log{t})^{k+1}.
\end{align*}
 Further, we get
\begin{align*}
|Z^{(k)}(t)| \ll t^{\frac{1}{6}}(\log{t})^{k+1}.
\end{align*}
\end{lemma}

\begin{proof}
To prove the first part we consider the well known bound of $\zeta$-function in the range $\sigma \geq \frac{1}{2}$ and $t\geq 2\pi$ (see \cite[eq: 2.4]{GI}), i.e.,
  $$\zeta(s)\ll \left(t^{\frac{1-\sigma}{3}}+1\right)\log{t}.$$
  
   By Cauchy integral formula we write
\begin{align*}
\zeta^{(k)}(s)= \frac{k!}{2\pi i}\int_{|z-s|=\rho}\frac{\zeta(z)}{(z-s)^{k+1}}dz,
\end{align*}
where $\rho$ is suitably chosen small radius of a circle centred at $s$. Then 
\begin{align*}
\zeta^{(k)}(s) \ll \frac{\left(t^{\frac{1-\sigma}{3}}+1\right)\log{t}}{\rho^k}.
\end{align*}
Taking $\rho = \frac{1}{\log{t}}$ to get the required result. 

\textbf{2nd part}:
Recall $\eta_k$ from \eqref{int2} and $\lambda_k$ from \eqref{lambda-recu}. By Lemma~\ref{L15}, for $2\pi\leq t \leq 2T$, we have \[\lambda_k(1/2 + it)=\O\left((\log{\tau})^{k-1}\right).\]  Combining above bounds with \eqref{int2} and \eqref{omegabound} we obtain the result.
\end{proof}
 
 Next, we establish the relation between higher derivatives of zeta function and Hardy's $Z$-function. For the first derivative, Hall~\cite{Hall} deduced the following identity:
 \begin{align}\label{hall-identity}
 \zeta^{'}\Big( \frac{1}{2}+it\Big)= e^{-i\theta(t)}\{Z^{'}(t) -i\theta^{'}(t)Z(t)\}.
 \end{align}
 Thus, 
 \begin{align*}
 \left|\zeta^{'}\Big( \frac{1}{2}+it\Big)\right|^2= Z^{'}(t)^2 + \theta^{'}(t)^2Z(t)^2.
 \end{align*}
 From identity \eqref{hall-identity} we obtain identities for higher derivative case upto some error term in the following lemma.
 \begin{lemma}\label{darivative-zeta}
 Let $n\geq 2$ and $\tau= \sqrt{\frac{t}{2 \pi}}$  with $T\leq t \leq 2T$. We get 
 \begin{align*}
 i^{n-1}\zeta^{(n)}\Big( \frac{1}{2}+it\Big)= e^{-i\theta(t)} \sum_{j=0}^{n-1}\binom{n-1}{j}&\Big((-i\log{\tau})^{n-1-j} Z^{(j+1)}{(t)} + (-i\log{\tau})^{n-j}Z^{(j)}(t)\Big)\\
\nonumber & +\O\Big(T^{-\frac{5}{6}}\log^{n-1}{T}\Big).
 \end{align*} 
 \end{lemma}
 \begin{proof} Setting $f(t):= Z^{'}(t) -i\theta^{'}(t)Z(t)$ and applying Leibniz's rule on \eqref{hall-identity} we deduce
 \begin{align}\label{zeta^n}
 i^{n-1}\zeta^{(n)}\Big( \frac{1}{2}+it\Big)= \sum_{j=0}^{n-1}\binom{n-1}{j}\frac{d^{j}}{dt^{j}}f(t)  \frac{d^{n-1-j}}{dt^{n-1-j}}e^{-i\theta(t)}.
 \end{align}
 From \cite[eq: 30]{Hall} we have $\theta^{'}(t)= \log{\tau} + \O(t^{-1})$ and $\theta^{(k)}(t)= \O(t^{-k+1})$ for $k\geq 2$. So, for a positive integer $k$ we get
 \begin{align}\label{e^n}
 \frac{d^{k}}{dt^{k}} e^{-i\theta(t)} = (-i \log{\tau})^{k}e^{-i\theta(t)} + \O\Big(\frac{(\log{t})^{k-1}}{t}\Big).
 \end{align}
 Again by Leibniz's rule
 \begin{align*}
 \frac{d^{k}}{dt^{k}}f(t) = Z^{(k+1)}(t)-i \sum_{l=0}^{k}\binom{k}{l}\frac{d^{l}}{dt^{l}}\theta^{'}(t) Z^{(k-l)}(t)
 = Z^{(k+1)}(t)- i  Z^{(k)}(t) \log{\tau} +\O\Big(\frac{|Z^{(k-1)}(t)|}{t}\Big).
  \end{align*}
 Thus, applying second part of Lemma~\ref{L14} to the above error term we get
\begin{align}\label{f^k}
\frac{d^{k}}{dt^{k}}f(t) = Z^{(k+1)}(t)- i  Z^{(k)}(t) \log{\tau} +\O\Big(\frac{\log^k{t}}{t^{\frac{5}{6}}}\Big).
\end{align}
Hence, using \eqref{e^n} and \eqref{f^k} in \eqref{zeta^n} we conclude the formula.
 \end{proof}

\section{Proof of theorems}\label{proofs}
 \begin{proof}[\textbf{Proof of Theorem \ref{Thm1}}]
   To prove \eqref{C1}, which is essentially Theorem 2 of \cite{Con}, Conrey improved the bound for $\xi_0$ in \cite[Theorme 2]{BCH} for $\zeta$-function case.  More explicitly, he improves $ \xi_0\ll \Delta^{-\frac{7}{2}}T^{\frac{5}{2}+\epsilon}X^{\frac{17}{12}}T^{\frac{1}{4}}$ in  (C1) of \cite[Theorme 2]{BCH} to $\xi_0\ll \Delta^{-\frac{7}{2}}T^{\frac{5}{2}+2\epsilon}X^{3\epsilon} (T^{\frac{1}{2}}X^{\frac{7}{8}}+ X^{\frac{7}{4}})$ (see eq: (75) in \cite{Con}), where $\epsilon>0$ is sufficiently small, which allows $\theta < \frac{4}{7}$ in Theorem 2 of \cite{Con}  and hence in Theorem 1 of \cite{BCH}. 
   
 Note that a modification of calculations in page 179--180 of \cite{BCH} gives
\begin{align*}
&\int_{T}^{T+H}\Big|\zeta\Big(\frac{1}{2}+it\Big)\sum_{n\leq T^{\theta}}\frac{\mu{(n)}}{n^{\frac{1}{2}+it}}\Big(1- \frac{\log{n}}{\theta\log{T}}\Big)\Big|^2dt\\
&= \int_{T}^{T+H}\sum_{q,l\leq T^{\theta}}\frac{b_l\overline{b_q}}{lq}(l,q)\left(\log{\frac{t(l,q)^2}{2\pi lq}}  + 2\gamma \right) dt + \O\left(\xi_0(H+\Delta \log{T}) + \frac{\Delta^2}{T}\log^3{T}\right),
\end{align*}
where $\exp{(5\sqrt{\log{T}})} \leq \Delta < H(\log{T})^{-1}$, $H=T^a, a\leq 1$ and $b_n:= \mu(n)\big(1- \frac{\log{n}}{\theta\log{T}}\big)$. The integral term has been already computed in \cite{BCH} (also derived in Proposition 2 of \cite{DP1}), which is asymptotic to $H\big(1+\frac{1}{\theta}\big)$ for any $H,\theta>0$.  Along with improved bound of $\xi_0$ by Conrey \cite{Con}, the estimate of the above error term is
\begin{align*}
\ll \Delta^{-\frac{7}{2}}T^{\frac{5}{2}+2\epsilon}X^{3\epsilon} (T^{\frac{1}{2}}X^{\frac{7}{8}}+ X^{\frac{7}{4}})(H+\Delta \log{T}) + \frac{\Delta^2}{T}\log^3{T}.
\end{align*}

For $H=T^a$ and $\Delta=H^{1-\epsilon}$, the last error term is much smaller than $H$ and  the first error term is 
\begin{align*}
\ll T^{\frac{5}{2}(1-a)+ 6\epsilon}X^{3\epsilon} (T^{\frac{1}{2}}X^{\frac{7}{8}}+ X^{\frac{7}{4}})\ll T^{3-\frac{5}{2}a+\frac{7}{8}\theta +8\epsilon} + T^{\frac{5}{2}(1-a) + \frac{7}{4}\theta +8\epsilon}\ll H^{1-\delta}, 
\end{align*}
for sufficiently small $\delta>0$, provided that $\theta < \min\{\frac{4(7a-6)}{7}, \frac{2(7a-5)}{7} \}= \frac{2(7a-5)}{7}$ for $a\in [5/7, 1]$.

Thus, for  $\theta < \frac{2(7a-5)}{7}$ with $1\geq  a\geq \frac{5}{7}$, we can establish 
\begin{align*}
\int_{T}^{T+H}\Big|\zeta\Big(\frac{1}{2}+it\Big)\sum_{n\leq T^{\theta}}\frac{\mu{(n)}}{n^{\frac{1}{2}+it}}\Big(1- \frac{\log{n}}{\theta\log{T}}\Big)\Big|^2dt \sim H\Big(1+\frac{1}{\theta}\Big) \mbox{ as } T\rightarrow \infty.
\end{align*}
Further, in \cite{Con} Conrey considered the mean values of higher derivatives of $\zeta$-function. In this scenario,
he adopted the same strategy as in \cite{BCH} (see the proof of Theorem 2 in \cite{Con}). In our case also following the same arguments as for $k=0$ case as discussed above, we extend the mean value result for higher order derivatives of zeta function in short intervals, provided that $\theta < \frac{2(7a-5)}{7}$ with $1\geq  a\geq \frac{5}{7}$. Note that for $P(x)=x$ and $Q(x)=x^k, k \geq 0$ and $R=0$, the constant in \eqref{C1} can be computed as
\begin{align*}
c(P, Q, R)= \frac{1}{2}+ \frac{1}{(2k+1)\theta}+ \frac{\theta k^2}{3(2k-1)}.
\end{align*}
Hence, the result follows.
 \end{proof}
 
 \begin{proof}[\textbf{Proof of Theorem} \ref{thm5}]
  From Lemma~\ref{darivative-zeta} we get
 \begin{align}\label{zeta^n-final2}
(-i)^{n-1}\zeta^{(n)}\Big( \frac{1}{2}-it\Big)= e^{i\theta(t)} \sum_{j=0}^{n-1}\binom{n-1}{j}&\Big((i\log{\tau})^{n-1-j} Z^{(j+1)}{(t)} + (i\log{\tau})^{n-j}Z^{(j)}(t)\Big)\\
\nonumber & +\O\Big(T^{-\frac{5}{6}}\log^{n-1}{T}\Big).
\end{align}
Combining  Lemma~\ref{darivative-zeta} and \eqref{zeta^n-final2}, we have
\begin{small}
\begin{align}\label{zeta-prod1}
\zeta^{(m)}\Big(\frac{1}{2}+it \Big)\zeta^{(n)}\Big(\frac{1}{2}-it \Big)=(-\log{\tau})^{m+n}
&\sum_{k=0}^{n-1}\sum_{j=0}^{m-1}\binom{m-1}{j}\binom{n-1}{k}\frac{(-i)^ki^j}{(\log{\tau})^{j+k+2}}A(j,k)\\
\nonumber & + \O\left(T^{-\frac{2}{3}}(\log{T})^{m+n-2}\right),
\end{align}
\end{small}
where $A(j,k)$ is given by the expression 
\begin{align*}
A(j,k)& =  Z^{(j+1)}(t)Z^{(k+1)}(t) +i \log{\tau} Z^{(j+1)}(t)Z^{(k)}(t) -i\log{\tau} Z^{(j)}(t)Z^{(k+1)}(t) \\
&+(\log{\tau})^{2} Z^{(j)}(t)Z^{(k)}(t).
\end{align*}

Rewriting the above equation in terms of $G_k(t)$ that has been defined in Lemma~\ref{L12},  we have 
\begin{align*}
\frac{A(j,k)}{(\log{\tau})^{j+k+2}} = G_{j+1}(t)G_{k+1}(t) + iG_{j+1}(t)G_{k}(t)-iG_{j}(t)G_{k+1}(t) + G_{j}(t)G_{k}(t).
\end{align*}
 Now, applying Lemma~\ref{L12}, we evaluate the integral
$$\mathcal{I}(j,k, T):= \int\limits_{T}^{T+H}\frac{A(j,k)}{(\log{\tau})^{j+k+2}} \left|\sum_{n\leq T^{\theta}}\frac{\mu{(n)}}{n^{\frac{1}{2}+it}}\Big(1- \frac{\log{n}}{\theta\log{T}}\Big)\right|^2 dt. $$
We evaluate $\mathcal{I}(j,k, T)$ in two cases according to $j+k$ is even or odd.
For even case, applying Lemma~\ref{L12}, we get
 \begin{align}\label{even}
\mathcal{I}(j,k, T) &= i^{k-j}\Big[2 +\frac{1}{\theta}\Big(\frac{1}{k+j+3}+ \frac{1}{k+j+1}\Big)+ \frac{4\theta}{3}\Big(\frac{(j+1)(k+1)}{k+j+1}+ \frac{jk}{k+j-1}\Big)\Big]H\\
& + \O\Big(H\frac{(\log\log{T})^3}{\log{T}}\Big).
\end{align}
 For the case $j+k$ odd we get
\begin{align}\label{28/8/23:00}
\mathcal{I}(j,k, T) = i^k(-i)^j\Big[2 +\frac{2}{\theta}\Big( \frac{1}{k+j+2}\Big)+ \frac{4\theta}{3}\Big(\frac{(j+1)k}{k+j}+ \frac{j(k+1)}{k+j}\Big) + \O\Big(\frac{(\log\log{T})^3}{\log{T}}\Big)\Big]H.
\end{align}
Note that \eqref{even} and \eqref{28/8/23:00} are valid for $H= T^a, \frac{1}{2}+\theta < a <\frac{4\kappa+3}{4(\kappa+1)}$, where $0< \theta < \frac{2\kappa+1}{4(\kappa+1)} $ and $\kappa=\min \{j,k\}$. But we extend the range of $a$ to $ \frac{1}{2}+\theta < a \leq 1$ in the following way:  whenever $\frac{4\kappa+3}{4(\kappa+1)} \leq a \leq 1$ we can split the interval $[T, T+H]$ into subinterval of length $T^b$, where $ b <\frac{4\kappa+3}{4(\kappa+1)}$, for which \eqref{28/8/23:00} valid. Then adding the corresponding result we obtain that  \eqref{28/8/23:00} is true for $\frac{1}{2}+\theta < a \leq 1$.
 
  We have to evaluate the sum $$\sum_{k=0}^{n-1}\sum_{j=0}^{m-1}\binom{m-1}{j}\binom{n-1}{k}(-i)^ki^j\mathcal{I}(j,k, T).$$
Now, this sum can be break in two part as
\begin{align*}
\sum_{k=0}^{n-1}\sum_{\substack{j=0}}^{m-1}=\sum_{k=0}^{n-1}\sum_{\substack{j=0\\j+k \mbox{ even}}}^{m-1} +\sum_{k=0}^{n-1}\sum_{\substack{j=0\\j+k \mbox{ odd}}}^{m-1}.
\end{align*}
Thus, 
\begin{small}
\begin{align}\label{expression1}
\sum_{k=0}^{n-1}\sum_{j=0}^{m-1}\binom{m-1}{j}\binom{n-1}{k}(-i)^ki^j\mathcal{I}(j,k, T) &= 2H\sum_{k=0}^{n-1}\sum_{\substack{j=0}}^{m-1}\binom{m-1}{j}\binom{n-1}{k} +  \frac{H}{\theta}\mathcal{I}_1\\
&\nonumber + \frac{4H\theta}{3}\mathcal{I}_2 + \O\Big(\frac{H(\log\log{T})^3}{\log{T}}\Big),
\end{align}
\end{small}
where $\mathcal{I}_1, \mathcal{I}_2$ are defined by
\begin{footnotesize}
\begin{align*}
\mathcal{I}_1 := \sum_{k=0}^{n-1}\sum_{\substack{j=0\\j+k \mbox{ even}}}^{m-1}\binom{m-1}{j}\binom{n-1}{k}\Big(\frac{1}{k+j+3}+ \frac{1}{k+j+1}\Big) + \sum_{k=0}^{n-1}\sum_{\substack{j=0\\j+k \mbox{ odd}}}^{m-1}\binom{m-1}{j}\binom{n-1}{k}\frac{2}{k+j+2},
\end{align*}
\end{footnotesize}
and 
\begin{small}
\begin{align*}
\mathcal{I}_2& := \sum_{k=0}^{n-1}\sum_{\substack{j=0\\j+k \mbox{ even}}}^{m-1}\binom{m-1}{j}\binom{n-1}{k}\Big(\frac{(j+1)(k+1)}{k+j+1}+ \frac{jk}{k+j-1}\Big)\\
& + \sum_{k=0}^{n-1}\sum_{\substack{j=0\\j+k \mbox{ odd}}}^{m-1}\binom{m-1}{j}\binom{n-1}{k}\Big(\frac{(j+1)k}{k+j}+\frac{(k+1)j}{k+j}\Big).
\end{align*}
\end{small}
For a real variable $x$ we have the following binomial identity
\begin{align*}
(1+x)^{m+n-2} = \sum_{k=0}^{n-1}\sum_{\substack{j=0}}^{m-1}\binom{m-1}{j}\binom{n-1}{k}x^{j+k}.
\end{align*}
Multiplying both sides of above identity by $(1+x)^2$ and integrate in the interval $[-1, 1]$ we get
\begin{align}\label{I_1}
\frac{2^{m+n}}{m+n+1}= \frac{1}{2}\int_{-1}^{1}(x^2 + 1 + 2x)(1+x)^{m+n-2}dx = \mathcal{I}_1 .
\end{align}
Let us define real valued functions $f$ and $g$ by
\begin{align*}
f(x):= \frac{d}{dx}(x(1+x)^{m-1}) \frac{d}{dx} (x(1+x)^{n-1}) +\frac{d}{dx}((1+x)^{m-1})\frac{d}{dx}((1+x)^{n-1}),\\
g(x):= \frac{d}{dx}(x(1+x)^{m-1})\frac{d}{dx}((1+x)^{n-1}) + \frac{d}{dx}((1+x)^{m-1})\frac{d}{dx}(x(1+x)^{n-1}).
\end{align*}
Now, rewriting $f(x)$ and $g(x)$ in term of binomial formula and after differentiating we integrate the identities in the interval $[-1, 1]$ to get
\begin{align*}
\frac{1}{2} \int_{-1}^{1} \big(f(x)+g(x) \big) dx = \mathcal{I}_2.
\end{align*}
By the change of variable $z=1+x$ on the left hand side of above identity 
\begin{align}\label{I_2}
 \mathcal{I}_2= \frac{1}{2}mn\int_{0}^{2}z^{m+n-2}dz = \frac{mn}{m+n-1}2^{m+n-2}.
\end{align}

Thus combining \eqref{I_1} and \eqref{I_2} with \eqref{expression1} we get
\begin{align}\label{mean-}
\sum_{k=0}^{n-1}\sum_{j=0}^{m-1}&\binom{m-1}{j}\binom{n-1}{k}(-i)^ki^j\mathcal{I}(j,k, T) \\
\nonumber &= H2^{m+n}\Big(\frac{1}{2} +  \frac{1}{\theta (m+n+1)} + \frac{mn\theta}{3(m+n-1)} +\O\Big(\frac{(\log\log{T})^3}{\log{T}}\Big)\Big).
\end{align}
Hence, multiplying \eqref{zeta-prod1} by the absolute square of the mollifier and then by using \eqref{mean-} we deduce the result.
%\begin{align*}
% &\int_{T}^{T+H}\zeta^{(m)}\Big(\frac{1}{2}+it \Big)\zeta^{(n)}\Big(\frac{1}{2}-it \Big) \left|\sum_{n\leq T^{\theta}}\frac{\mu{(n)}}{n^{\frac{1}{2}+it}}\Big(1- \frac{\log{n}}{\theta\log{T}}\Big)\right|^2 dt \\
% & = (-1)^{m+n}\Big(\frac{1}{2} +  \frac{1}{\theta (m+n+1)} + \frac{mn\theta}{3(m+n-1)}+\O\Big(\frac{(\log\log{T})^3}{\log{T}}\Big)\Big)H \Big(\log{\frac{T}{2 \pi}}\Big)^{m+n}.
%\end{align*} 
\end{proof}
 %%%%%%%%%%%%%%%%%%%%%%%%%%%%%%%%%%%%%%%%%%%%%%%%%%%%%%%%%%%%%%%%%%%%%%%%%%%%%%%%%%%%%%%%%%%%%%%%%%%%%%%%%%%%%%%%%%%%%%%
 \begin{proof}[\textbf{Proof of Theorem} \ref{thm6}]
 \textbf{(a)}
 Let us set
\begin{align}\label{molli}
\Phi(s)= \sum_{n\leq T^{\theta}}\frac{\mu{(n)}}{n^{s}}\Big(1- \frac{\log{n}}{\theta\log{T}}\Big).
\end{align}
We would apply the well known Littlewood's lemma on $z_k(s)\Phi(s)$ in the region which is given by joining the vertices  $\frac{1}{2}+iT, 2+iT, 2+ i(T+H)$ and $\frac{1}{2}+i(T+H)$. The function $z_k(s)$ was introduced by Levinson and Montgomery~\cite[Section 3]{LM} and it is given by
 \begin{align*}
 z_k(s)= (-1)^k2^s(\log{2})^{-k}\zeta^{(k)}(s).
 \end{align*}
 Then the lemma gives
 \begin{align*}
2\pi\sum_{\substack{T \leq \gamma_k\leq T+H \\ \beta_k > \frac{1}{2}}} \Big(\beta_k - \frac{1}{2} \Big) &=\int_{T}^{T+H}\log\Big|z_k\Phi\Big(\frac{1}{2}+it\Big)\Big|dt - \int_{T}^{T+H}\log\Big|z_k\Phi\Big(2+it\Big)\Big|dt\\
& -\int_{\frac{1}{2}}^{2}\arg z_k\Phi{(\sigma+iT)}d\sigma + \int_{\frac{1}{2}}^{2}\arg z_k\Phi{(\sigma+i(T+H))}d\sigma.
\end{align*}
 
 By writing 
 $\Phi(s)=1+\Phi_1(s)$ and using the fact that $b_n \leq 1$ we have
 $|\Phi_1(s)|< 2^{1-\sigma} <1$ for $\sigma >3/2$. Hence $\log{\Phi(\sigma+it)}$ is analytic for $\sigma >3/2$. Also we have $|z_k(\sigma+it) -1|\leq \frac{1}{2}(2/3)^{\sigma/2}$ (see \cite[eq: 3.2]{LM}). Thus, $z_k\Phi{(\sigma+it)}=1+\O\big((2/3)^{\sigma/2}\big)$. By using Cauchy integral formula for the contour 
 $$\{\sigma+it : 2 \leq \sigma < \infty, T\leq t \leq T+H \},$$ we get 
\begin{align}\label{pfeq8}
\int_{T}^{T+H}\log{z_k\Phi{(2+it)}}dt =\O(1).
\end{align}
 
Since $\arg z_k\Phi=\arg z_k+\arg\Phi$, combining known results for the integral of $\arg z_k$ in $[\frac{1}{2}, 2]$ from \cite[Section 3]{LM} and   the estimate $\arg \Phi(\sigma+iT)=\O(\log{T})$ for $\sigma \geq 1/2 $ (see \cite[page 134]{Selb} ), we see that two horizontal line integral contribute at most $\O(\log{T})$.
 Thus, we get
\begin{align}\label{pfeq71}
2\pi\sum_{\substack{T \leq \gamma_k\leq T+H \\ \beta_k > \frac{1}{2}}} \Big(\beta_k - \frac{1}{2} \Big) & = \int_{T}^{T+H}\log\Big|\zeta^{(k)}\Big(\frac{1}{2}+it\Big)\Phi\Big(\frac{1}{2}+it\Big)\Big|dt )\\
\nonumber &+ H\Big(\frac{1}{2}\log{2} -k\log\log{2}\Big) + \O(\log{T}).
\end{align}
Now, by applying the arithmetic-geometric mean inequality on the integral appearing on the right-hand side of \eqref{pfeq71}, we have
\begin{align*}
\int_{T}^{T+H}\log\Big|\zeta^{(k)}\Big(\frac{1}{2}+it\Big)\Phi\Big(\frac{1}{2}+it\Big)\Big|dt  \leq \frac{H}{2}\log\Big(\frac{1}{H}\int_{T}^{T+H}\Big|\zeta^{(k)}\Big(\frac{1}{2}+it\Big)\Phi\Big(\frac{1}{2}+it\Big)\Big|^2dt\Big).
\end{align*}
 Hence, by using Corollary~\ref{coro5} on the right hand side of the above inequality we conclude the result
 \begin{align}\label{pfeq712}
2\pi\sum_{\substack{T \leq \gamma_k\leq T+H \\ \beta_k > \frac{1}{2}}} \Big(\beta_k - \frac{1}{2} \Big) \leq & kH\log\log{\frac{T}{2\pi }} + \frac{H}{2}\log{\Big(\frac{1}{2}+ \frac{1}{(2k+1)\theta} + \frac{k^2\theta}{3(2k-1)} \Big)}\\
\nonumber & + H\Big(\frac{\log{2}}{2}-k\log\log{2}\Big) + \O\Big(\frac{H(\log\log{T})^3}{\log{T}}\Big).
\end{align}
\textbf{(b)}\\
Levinson and Montgomery~\cite[Theorem 3]{LM} proved that for $H > 0,$
\begin{small}
\begin{align}\label{pfeq713}
\sum_{\substack{T \leq \gamma_k\leq T+H }} \Big(\beta_k - \frac{1}{2} \Big) = 
\frac{kH}{2\pi}\log\log{\frac{T}{2\pi}} + \frac{H}{2\pi}\Big(\frac{\log{2}}{2}-k\log\log{2}\Big) + \O\Big(\frac{H^2}{T\log{T}}+\log{T}\Big).
\end{align}
\end{small}
 Also, we have
 \begin{align}\label{pfeq714}
 -\sum_{\substack{T \leq \gamma_k\leq T+H }} \Big(\beta_k - \frac{1}{2} \Big) = \sum_{\substack{T \leq \gamma_k\leq T+H\\ \beta_k < 1/2 }} \Big(\frac{1}{2} -\beta_k\Big) - \sum_{\substack{T \leq \gamma_k\leq T+H \\ \beta_k > 1/2}} \Big(\beta_k - \frac{1}{2} \Big).
 \end{align}
 Hence, using \eqref{pfeq712} and \eqref{pfeq713} in \eqref{pfeq714} under the condition $H=T^a, \frac{1}{2} + \theta < a \leq 1$, where $0< \theta < \frac{2k+1}{4(k+1)}$, we obtain the inequality (b).
 \end{proof}   
 \begin{proof}[\textbf{Proof of Corollary} \ref{coro6}] We want to optimize the coefficient of $H$ in (a) and (b) of Theorem~\ref{thm6}. Clearly,  given a integer $k$ it is enough to obtain minimum value of the function
 \begin{align*}
 g(\theta):= \frac{1}{(2k+1)\theta} + \frac{k^2\theta}{3(2k-1)}, ~~~ 0<\theta < \frac{2k+1}{4(k+1)}.
\end{align*}
 We see that $g$ attains its minimum value at $\theta = \sqrt{\frac{3(2k-1)}{k^2(2k+1)}}$. But this extremal point must be smaller than $ \frac{2k+1}{4(k+1)}$. This condition gives $k \geq 4$.
  \end{proof}
  
%***************************************************************************
\begin{proof}[\textbf{Proof of Theorem} \ref{thm2}] Let us set
\begin{align*}
f(s):=\eta_k(s) \Phi(s)\Big(-\frac{2}{\omega{(s)}}\Big)^{k-1},
\end{align*}
where  $\eta_k(s)$ as in \eqref{int2},  $\Phi(s)$ as in \eqref{molli} and $\omega(s)$ as in \eqref{omega}. Similar to Theorem~\ref{thm6} again we apply Littlewood lemma on $f(s)$
 in the rectangle $\mathcal{D}$ with vertices $\frac{1}{2}+iT, m+ iT, m+i(T+H)$ and $\frac{1}{2}+i(T+H)$,  where $m$ is a positive integer. 

 Note that $\omega(s)$ is non-vanishing and analytic on $\mathcal{D}$ (see \eqref{omegabound}).  Thus, the number of zeros of $\eta_k$ in $\mathcal{D}$ is less than or equal to the number of zeros of $f$ in $\mathcal{D}$.

 For $\sigma >1$, by using Lemma \ref{L15} in \eqref{int2} we get
 \begin{align}\label{pfeq10}
 \eta_k(\sigma+it)(\omega(\sigma+it))^{1-k}=\left(\frac{-1}{2}\right)^{k-1}\zeta(\sigma+it) + \O\left(\frac{1}{\log{t}}\right).
 \end{align}
 Thus,
 \begin{align}\label{pfeq9}
\int_{T}^{T+H}\log|f(m+it)|dt = \int_{T}^{T+H}\log|\zeta \Phi(m+it)|dt  + \O{(H{(\log{T}})^{-1})}.
 \end{align}
 Now, following the arguments for \eqref{pfeq8} we can show that the integration of $\log{\zeta\Phi(m+it)}$ in $[T, T+H]$ is $\O(1)$.
Since $\Re{(\log{f(s)})}=\log{|f(s)|}= \frac{1}{2}(\log{f(s)} + \overline{\log{f(s)}})$,  we conclude that
\begin{align}\label{tail-apporx}
\int_{T}^{T+H}\log|f\left(m+it\right)|dt= \O{(H{(\log{T}})^{-1})}.
\end{align}

Again, taking into account the fact that
$$\arg{(f(s))}= \Im{(\log{f(s)})} =\Im{(\log{\eta_k(s)\omega(s)^{1-k}})} + \Im{(\log{\Phi(s)})},$$ 
and hence for $\sigma>\frac{3}{2}$ we have $\arg{(f(s))} \ll 1$. 
 From \eqref{pfeq10}, it is not hard to show that
\begin{align}\label{arg-apporx1}
\int_{\frac{3}{2}}^{m}\arg f\left(\sigma+i(T+H)\right)d\sigma - \int_{\frac{3}{2}}^{m}\arg f\left(\sigma+iT\right)d\sigma \ll m.
 \end{align}
Now, note that $|\Re{f(2+it)}|> (2-\zeta(2))^2 > 0$ and from Lemma \ref{L14} and \eqref{int2} we obtain 
 \begin{align*}
 f(\sigma + it) \ll X^{1- \sigma} \Big(t^{\frac{1- \sigma}{3}} +1\Big)\log{t}\quad  \mbox{ for } \frac{1}{2} \leq \sigma \leq 2.
 \end{align*}
  Thus, by using \cite[Lemma, p. 213]{Titc} we obtain 
  \begin{align*}
 \arg f\left(\sigma + i(T+H)\right), \arg f\left(\sigma + iT \right) \ll \log{T} +\log{X}.
 \end{align*}
 Hence, for a real number $\beta < 2$ closer to $2$, we get 
 \begin{align}\label{arg-aporx2}
\int_{\frac{1}{2}}^{\beta}\arg f\left(\sigma+i(T+H)\right)d\sigma - \int_{\frac{1}{2}}^{\beta}\arg f\left(\sigma+iT\right)d\sigma \ll \log{T}.
 \end{align}
 
Recall  $G_k$ from Section~\ref{pre}. By using the formula \eqref{omegabound} and Lemma~\ref{L14} on $f$ we have 
\begin{align}\label{f-apporx}
\Big|f\Big( \frac{1}{2}+it\Big)\Big|= |G_k(t)| + \O\big(t^{-\frac{5}{6}}\log{t}\big).
\end{align} 
Combining \eqref{tail-apporx}, \eqref{arg-apporx1}, \eqref{arg-aporx2} and  \eqref{f-apporx} with the Littlewood lemma to get
\begin{align}\label{pfeq7}
2\pi\sum_{\substack{\eta_k(\rho_k^{'})=0 \\\rho_k^{'} \in \mathcal{D}, \beta_k^{'}> \frac{1}{2} }} \left(\beta_k^{'} - \frac{1}{2} \right) & \leq \int_{T}^{T+H}\log{\Big|G_k(t)\Phi\left(\frac{1}{2}+it\right)\Big|}dt +\O\Big(\frac{H}{\log{T}}\Big).
\end{align}
Now, by applying the arithmetic-geometric mean inequality on the integral appearing on the right-hand side of \eqref{pfeq7} and then appealing Lemma~\ref{L12} to conclude the result for  $H=T^a$, where $\frac{1}{2}+\theta<a < \frac{4k+3}{4(k+1)}$. For $\frac{4k+3}{4(k+1)} \leq a \leq 1$, by proceeding in a similar way as in the paragraph after \eqref{28/8/23:00} to conclude the required result. 
 \end{proof}
 
%\begin{proof}[\textbf{Proof of Corollary} \ref{coro3}]
%From \cite[Eq: (30)]{Hall}, we know that, for $t \geq 2$,  $\theta'(t) = \log{\tau} + \O(t^{-1})$. Thus from \eqref{int1} and \eqref{4/9} we write $|\eta_k(t)|^2 = 4(\log{(T/2\pi)})^{-2}\big(Z^{(k)}(t)\big)^{2}$.  {\color{red} Hence, replacing $\big(Z^{(k)}(t)\big)^{2}$ by $|\eta_k(t)|^2(\log{(T/2\pi)})^{2}/4$ in Theorem~\ref{thm1} the result follows.}
%\end{proof}

%%%%%%%%%%%%%%%%%%%%%%%%%%%%%%%%%%%%%%%%%%%%%%%%%%%%%%%%%%%%%%%%%%%%%%%%%%

 \section{Acknowledgement}
We would like to thanks Prof. J. B. Conrey for helpful comments and Prof.  R. Balasubramanian for many helpful discussions. The authors would like to thank the referee for reviewing the manuscript in detail and giving valuable comments.  We also would like to thank the referee for bringing \cite{CGG} to our attantion.
The first author is supported by DST, Government of India under the DST-INSPIRE Faculty Scheme with Faculty Reg. No. IFA21-MA 168. The second author is supported by Science and Engineering Research Board [SRG/2023/000930].
%%%%%%%%%%%%%%%%%%%%%%%%%%%%%%%%%%%%%%%%%%%%%%%%%%%%%%%%%%%%%%%%%%%%%%%%%%%%%%%%%%%%%%%%%%%%%%%%%%%%%%%%%%%%%%%%%%%%

\end{document}